\documentclass[12pt,leqno]{article}
\usepackage{amsmath}
\usepackage{subfigure}
\usepackage{amsthm}
\usepackage{amstext}
\usepackage{amsopn}
\usepackage{texdraw}
\usepackage{graphicx}
\usepackage{pdfpages}
\usepackage{multicol,graphicx,color}
\usepackage{pslatex}
\usepackage{amsthm}
\usepackage{amsmath}
\usepackage{amssymb}
\usepackage{latexsym}
\usepackage{lscape}
\usepackage{epsfig}
\usepackage{pstricks}
\usepackage{amsfonts}
\usepackage{enumerate}
\usepackage{exscale}
\usepackage{relsize}
\usepackage{multirow}
\usepackage{mathrsfs}
\usepackage{hyperref}
\usepackage{tikz}
\usetikzlibrary{shapes.geometric, arrows}

\oddsidemargin 0in \topmargin 0in \textwidth 6.1in \textheight 8in 
\baselineskip=24pt
\parskip=2mm
\parindent=20pt

\newtheorem{theorem}{Theorem}[section]

\newtheorem{proposition}[theorem]{Proposition}

\theoremstyle{definition}
\newtheorem{definition}[theorem]{Definition}
\newtheorem{example}[theorem]{Example}

\theoremstyle{remark}
\newtheorem{remark}[theorem]{Remark}
\usepackage{tikz}
\usetikzlibrary{shapes.geometric, arrows}

\tikzstyle{startstop} = [rectangle, rounded corners, 
minimum width=5cm, 
minimum height=1cm,
text centered, 
text width=4.5cm, 
draw=black, 
fill=red!30]

\tikzstyle{io} = [trapezium, 
trapezium stretches=true, 
trapezium left angle=70, 
trapezium right angle=110, 
minimum width=5cm, 
minimum height=1cm, text centered, 
draw=black, fill=blue!30]

\tikzstyle{process} = [rectangle, 
minimum width=5cm, 
minimum height=1cm, 
text centered, 
text width=4.5cm, 
draw=black, 
fill=orange!30]

\tikzstyle{decision} = [diamond, 
minimum width=3cm, 
minimum height=1cm, 
text centered, 
text width=4.5cm, 
draw=black, 
fill=green!30]
\tikzstyle{arrow} = [thick,->,>=stealth]
\usepackage{multirow}
\begin{document}
\title{Degeneracy of  Planar Central Configurations in the $N$-Body Problem}
\author{Shanzhong Sun\\
Department of Mathematics \\
and\\
Academy for Multidisciplinary Studies\\
Capital Normal University, Beijing 100048, P. R. China\\
sunsz@cnu.edu.cn\\
 Zhifu Xie\\
 School of Mathematics and Natural Science\\
 The University of Southern Mississippi\\
 Hattiesburg, MS 39406,USA\\
 Zhifu.Xie@usm.edu\\
 Peng You\\
  School of Statistics and Mathematics\\
  Hebei University of Economics and Business\\
  Shijiazhuang Hebei 050061, P. R. China\\
you-peng@163.com
  }
 \date{}
\maketitle

\begin{abstract} 
The degeneracy of central configurations in the planar $N$-body problem makes their enumeration problem hard and the related dynamics appealing. To truly understand the bifurcations of central configurations, we should work in the FULL configuration space which also facilitates the computer-aided methods.  The degeneracy is always intertwined with the symmetry of the system of central configurations which makes the problem subtle.
By analyzing the Jacobian matrix of the system, we systematically explore the direct method to single out trivial zero eigenvalues associated with translational, rotational and scaling symmetries, thereby isolating the non-trivial part of the Jacobian to study the degeneracy. Four distinct formulations of degeneracy are presented, each tailored to handle different forms of the system appeared in the literature. The method is applied to such well-known examples as Lagrange's equilateral triangle solutions for arbitrary masses, the square configuration for four equal masses and the equilateral triangle with a central mass revealing specific mass values for which degeneracy occurs.  Combining with the interval algorithm, the nondegeneracy of rhombus central configurations for arbitrary mass is also established.

\end{abstract}
{\bf   Key Words:} $N$-Body Problem, Central Configuration, Symmetry, Degeneracy,  Jacobian Matrix, interval algorithm. \\
{\bf  Mathematics Subject Classification:} 70F10, 70F15

\section{Introduction}
A classical problem in celestial mechanics is to count all the central configurations in the planar $N$-body problem. A central configuration is a special arrangement of the position vectors for the given mass vector of $N$ bodies, which can generate a homographic or homothetic solution of the equations for the $N$-body problem. Euler (\cite{Euler1767}) and Lagrange (\cite{Lagrange1772}) enumerated the collinear and planar $3$-body central configurations respectively. For $N\geq 4$ the problem seems too difficult for a complete solution. Moulton (\cite{Moulton1910}) proved that there are exact $N!/2$ collinear central configurations for any $N$-body problem. Until 1996, Albouy (\cite{Albouy1996}) got the enumeration of the planar $4$-body central configurations for four equal masses. For the state of the art, please refer to (\cite{Moeckel14}) and references therein. {Note that the enumeration is always up to the symmetry, such as translation, rotation and scaling of the central configuration system to which we will return in \S 2.}

One of the difficulties {for the counting problem} comes from the existence of degeneracy or bifurcation of central configurations for certain masses, which causes change of number of central configurations. Degeneracy or bifurcation has been observed for many years. However, only a few cases have been studied.

The story started with Palmore (\cite{Palmore1975}, 1975) who proved that when $m_1=m_2=m_3$ are placed at the vertices of an equilateral triangle and $0<m_4\leq m_1$ such that $m_4=m_4^*$ with $m_4^*/m_1=\frac{2+3\sqrt{3}}{18-5\sqrt{3}}$ is placed at the center of the triangle, the configuration is a degenerate central configuration of the planar $4$-body problem. 
This was to answer a question raised by Smale (\cite{Smale1974}, 1974) which asks whether there exists a degenerate central configuration in the planar $N$-body problem for any $N\geq 4$. Then Palmore ( \cite{Palmore1976}, 1976) extended his example to the $(N+1)$-body problem with arbitrary $N(\geq 4)$ bodies at the vertex of a regular polygon and a mass at the center of the configuration. But he didn't show whether the degeneracy gives rise to a bifurcation.  Meyer and Schmidt (\cite{Meyer1987, Meyer1988, Meyer1988a}, 1987, 1988) provided a comprehensive analysis of bifurcations for $4\leq N\leq 13$. Moeckel and Simo (\cite{MS1995}, 1995) showed that the $N$-gons bifurcate to spacial central configurations.
Shi and Xie (\cite{Shi2010}, 2010) use analytic methods to show that there is exactly one family of concave isosceles triangle central configuration bifurcating from equilateral triangle configuration with one mass at center.

Simo (\cite{Simo1978}, 1977) presented a complete numerical study on bifurcations of the central configurations in the $4$-body problem. Rusu and Santoprete (\cite{Santoprete2015}, 2015)  investigated the bifurcations of central configurations of the planar $4$-body problem when some of the masses are equal. Using the Krawczyk operator of interval computation and some result of equivariant bifurcation theory, they provided a rigorous computer-assisted proof for the existence of such bifurcations and classified them as pitch fork and fold bifurcations.  

Gannaway (\cite{Gannaway1981}, 1981) and Arenstorf (\cite{Arenstorf1982}, 1982) presented the analytical studies of degeneracy and bifurcations in a planar restricted $4$-body central configuration with three arbitrary masses and a fourth small one. It was shown that each $3$-body collinear central configuration generates exactly two non-collinear central configurations (besides four collinear ones) of four bodies with small $m_4\geq 0$; and that any $3$-body equilateral triangle central configuration generates exactly $8$, $9$ or $10$ (depending on the primary masses $m_1, m_2, m_3$) planar four-body central configurations with $m_4=0$. Besides this, Barros and Leandro (\cite{BL2011, BL2014}, 2011, 2014) also proved that for the planar restricted $4$-body problem the set of degenerate central configurations form a simple closed analytic curve confirming a numerical conclusion by Pedersen (\cite{P1944}, 1944), and the bifurcation set in the mass space is a simple closed continuous curve. It is also worth mentioning that Figueras, Tucker and Zgliczynski (\cite{FTZ2024}, 2024) gave an alternative proof of the degeneracy, bifurcation and enumeration for planar circular restricted $4$-body problem due to Barros and Leandro.

Xia (\cite{Xia1991},1991) estimated the numbers of central configurations for some open sets of positive masses by using the method of analytical continuation or implicit function theorem. Interestingly, the bifurcations of central configurations may occur at collisions between two zero masses. 
Degeneracy and bifurcations of various special central configurations were studied by Leandro (\cite{Leandro2003}, 2003) and Roberts (\cite{Roberts2025}, 2025). Wang (\cite{Wang2025}, 2025) also studied the degeneracy of central configuration in full space.   

{ Liu and the second named author (\cite{Liu2025}) provided a detailed bifurcation analysis of planar concave $4$-body central configurations with two pairs of equal masses, revealing subtle degeneracy phenomena that depend critically on the ambient configuration space. In their study, a one-parameter family of symmetric kite central configurations is analyzed within a symmetry-restricted subspace, where the Jacobian generically has corank one and only a single fold bifurcation occurs at a critical mass value $m=m_0$. However, when the same family is considered in the full planar configuration space, additional degeneracies emerge. In particular, at the critical mass value $\tilde m^*$, a symmetric central configuration that appears nondegenerate in the reduced subspace becomes degenerate in the full space, giving rise to a pitchfork bifurcation and the emergence of asymmetric central configurations as shown in \cite{Santoprete2015}. As noted in Remark~2 of \cite{Liu2025}, this degeneracy is completely suppressed by the imposed symmetry and therefore cannot be detected through a reduced analysis. By contrast, the fold bifurcation at $m=m_0$ corresponds to a degeneracy that persists both in the reduced subspace and in the full configuration space, leading to the disappearance of symmetric solutions beyond this threshold. These examples illustrate that degeneracy is not merely a property of a symmetry class, but rather an intrinsic feature of the full configuration space. Consequently, restricting attention to symmetric subspaces may obscure essential degeneracy mechanisms and lead to an incomplete bifurcation picture. This underscores the necessity of studying the degeneracy of planar central configurations in the FULL configuration space.}

But in the full configuration space,  the Jacobian matrix of the governing equations typically exhibits trivial zero eigenvalues due to the invariance of central configurations under translation, rotation or scaling. These eigenvalues complicate the study of degeneracy, as they obscure the true nature of the system’s critical points. Previous works have addressed this issue by employing appropriate coordinates suitable for the specific configurations to reduce the number of variables and eliminate these trivial zeros. 

{Is it possible to develop a direct, computation-friendly framework—free from symmetry-induced trivial eigenvalues—that enables a rigorous and complete characterization of degeneracy for general planar central configurations in the full configuration space?}

In this paper, we address this question by introducing a more direct approach: depending on the concrete form of the system of central configurations, we systematically remove the trivial zero eigenvalues from the Jacobian matrix itself, enabling a clearer analysis of degeneracy. We then apply this method to several well-known central configurations, illustrating its effectiveness in revealing their structural properties. Finally, combining with interval algorithm we establish the nondegeneracy of rhombus central configurations for any given mass. This provides a simple and direct numerical framework for verifying the (non)degeneracy of central configurations reported in the literature, independent of the specific methods originally used to obtain them.

The paper is arranged as follows.  In section  \ref{sec2}, we recall the various forms of the system of central configurations appeared in the literature and the corresponding symmetries.  In section  \ref{sec3} depending on the formulation of the system of central configurations, we give four forms of the definition of degeneracy of central configurations, and several well-known examples of central configurations are revisited to illustrate how the definitions work. Combining with the interval algorithm, we establish the nondegeneracy of rhombus central configurations for any given mass in section  \ref{sec4}. Various comments are given in the concluding section \ref{sec5}.

\section{Central configuration and its symmetries}\label{sec2}

Consider the Newtonian $N$-body problem with positive masses $m_1, m_2, \cdots, m_N$ in the plane $\mathbb{R}^2$. The position vector of the particles is given by $q=(q_1, q_2, \cdots, q_N)^T \in \mathbb{R}^{2N}$, with the position of $i$-th particle $q_i=(x_i,y_i)^T\in\mathbb{R}^2$ for $i=1, 2,\cdots , N$. Let the collision set $\triangle = \{ q\in \mathbb{R}^{2N} \,|\, q_i=q_j  \textrm{\,\,for\,\, some\,\,} i\not= j\}$.  The motion of $N$ celestial bodies is determined by Newton's law of universal gravitation
\begin{equation}\label{NT1}
    m_i\ddot{q_{i}} = \sum_{j=1,j\neq i}^{N} \frac{m_im_{j}(q_{j}-q_{i})}{r_{ij}^3},\quad i=1,2,\cdots,N,
\end{equation}
where $r_{ij}=\|{q_i-q_j}\|$ is the Euclidean distance between $q_i$ and $q_j$.  A central configuration is a special arrangement of particles such that the force on each body points toward the center of mass and is proportional to its position with respect to the center of mass
\begin{equation}\label{NT2}
c:=\frac{C}{M}=\frac{\sum_{i=1}^N m_iq_i}{\sum_{i=1}^N m_i}=\frac{m_1q_1+\cdots+m_N q_N}{m_1+m_2+\cdots+m_N} 
\end{equation}
More precisely, given a mass vector $m=(m_1, m_2, \cdots, m_N)^T$, the planar configuration $q=(q_1, q_2, \cdots, q_N)^T$ with $q_i \in \mathbb{R}^2$ is called a \textit{central configuration} for mass $m$ if there exists some positive constant $\lambda$ such that 
\begin{equation}\label{CC1}
   \sum_{j=1,j\neq i}^{N} \frac{m_{i} m_{j}(q_{j}-q_{i})}{r_{ij}^3} + \lambda{m_{i}(q_{i}-c)}=0, \hbox{ for } i= 1, 2, \cdots, N.
\end{equation}

Equivalently central configurations can be characterized as the critical points. Let us introduce the Newtonian potential function
\begin{equation}\label{NU}
U(q)=\sum_{1\leq i<j\leq N} \frac{m_im_j}{\|q_i-q_j\|}
\end{equation}
and the moment of inertia with respect to the center of mass
\begin{equation}\label{NI}
    I(q)= \sum_{i=1}^N m_i\|q_i-c\|^2.
\end{equation}
Clearly $U$ is a smooth function  on $\mathbb{R}^{2N}\backslash\triangle.$ Then the equation \eqref{CC1} of central configuration can be written as 
\begin{equation}\label{CC1a}
    \frac{\partial U}{\partial q_i} +\frac{1}{2} \lambda \frac{\partial I}{\partial q_i} =0, \hbox{ for } i= 1, 2, \cdots, N.
\end{equation}
This means that central configurations are critical points of $U$ restricted to the constant moment of inertia $I=I_0$ by the Lagrange multiplier theorem, or equivalently, they are critical points of \(\sqrt{I} U\) in the whole configuration space. If $\bar{q}$ is a central configuration corresponding to the positive $\bar{\lambda}$, then the constant must satisfy 
$$\bar{\lambda}=\frac{U(\bar{q})}{I(\bar{q})} >0$$
thanks to the homogeneity of $U$ and $I$.

Inspired by the previous discussions, in its most general form the equations of central configuration
can be written as
\begin{equation}\label{CCC}
F_i(q,m)=\sum_{j=1,j\not=i}^{N}
\frac{m_im_j(q_j-q_i) }{\|q_j-q_i\|^3}+\frac{U}{I}
m_i(q_i-c)=0,  \hspace{1cm} 1\leq i\leq N,
\end{equation}
with $q_i=[x_i, y_i]^T$, the potential $U$ and the moment of inertia $I$ given by (\ref{NU}) and (\ref{NI}) respectively, 
and $c$ understood as (\ref{NT2}).
This system suggests us to define the mapping
\begin{eqnarray}
F: \mathbf{R}^{2N}\times \mathbf{R}^{N}&\rightarrow & \mathbf{R}^{2N}\nonumber\\
                                 (q,m) &\mapsto & F(q,m)=[F_i(q,m)]^T.
\end{eqnarray}
Then (\ref{CCC}) is just the zero set $F^{-1}(0)$.

In this general form, it is evident that if \( \bar{q} \) is a central configuration, 
its translations, rotations and scalings are also central configurations. In other words, the equations of central configurations (\ref{CCC}) are invariant under translations, rotations and scalings. So, more precisely, we say that two planar central configurations \( q, \bar{q} \in (\mathbb{R}^2)^N \) are equivalent if there exist a constant scalar \( k \in \mathbb{R} \), a constant vector \( b \in \mathbb{R}^2 \), and a \( 2 \times 2 \) rotation matrix \( A \in SO(2) \) such that
\[
q_i = k A \bar{q}_i + b, \quad i = 1, \dots, N.
\]
{We take this as the BASIC definition of central configuration, and the others as its variants.}

{An important remark} is that the allowed symmetries of the equations of central configurations depend on the form of the equations. For example, the equation (\ref{CC1}) or equivalently, equation (\ref{CC1a}) is not scaling invariant.

{Another remark is that there is one more form of equations for central configurations in the literature. Namely, setting $c=0$ in (\ref{CC1}):
\begin{equation}\label{CC0}
\sum_{j=1,j\neq i}^N \frac{m_im_j(q_j-q_i)}{r_{ij}^3}+\lambda m_iq_i=0,\quad i=1,2,\cdots,N.
\end{equation}
In this form, it is only invariant under rotations.
}

The concept of a nondegenerate central configuration in the general form should take into account all the above allowed invariance (see, e.g., Palmore \cite{Palmore1976} and Meyer \cite{Meyer1987}). Let $\mathcal{M}\subset \mathbb{R}^{2N}$ be a linear subspace given by 
\begin{equation}
    \mathcal{M} =\{q\in \mathbb{R}^{2N}: \sum_{i=1}^N m_iq_i=0\},
\end{equation}
i.e., fixing the center of mass to be the origin. Let $\mathcal{S} =\{q\in \mathcal{M}: I(q)=1\}$ be the shape sphere in $\mathcal{M}$, and $\varPhi = \mathcal{S}/\sim$ where $\sim$ is the equivalence relation $q\sim \bar{q}$ if $q=A\bar{q}$ with $A\in SO(2)$. Let $[q]= \{\bar{q}\in \mathcal{S}: \bar{q} \sim q\}$. Since $U$ is invariant under rotations, it induces a well-defined function $\mathcal{U}: \varPhi\backslash \triangle \rightarrow \mathbb{R}$ by $\mathcal{U}([q]) = U(q).$ A central configuration $\bar{q}$ is called nondegenerate if the Hessian of $\mathcal{U}$ at $[\bar{q}]$ is non-singular. Although this definition is conceptually clear, the quotient space is awkward to work with when determining the nondegeneracy and bifurcation of central configurations because there is no a canonical way to choose local coordinates to calculate the Hessian. 

There is another common practice in the literature to get ride of the symmetry, namely putting constrains to work in a subspace. For example for planar problem, one can fix the relative positions of two bodies at one coordinates axis to kill the rotation and scaling symmetry. Again, there is no a canonical way to put the constraints.

Instead of working with quotient space or adding extra restrictions, we give four different forms to directly study the degeneracy in the original full configuration space $\mathbb{R}^{2N}\backslash\triangle$ while explicitly accounting for the inherent symmetries. { In each formulation, we systematically eliminate the trivial zero eigenvalues from the Jacobian matrix. The resulting reduced matrix characterizes the genuine degeneracy of a central configuration through its determinant, providing a unified and computationally efficient procedure. Without this reduction, one must compute the entire spectrum of the Jacobian matrix to determine whether a central configuration is degenerate, which is more difficult to carry out.} Although the four forms are essentially equivalent more or less, they have different presentations when we study the degeneracy, which affects the concrete computations and the final forms of the results.  That is why we need to address this issue with care. Even though they all appear in the literature in various forms, it is difficult to find a place where all of these forms were put together coherently. We include them here for completeness and for readers' convenience.

Based on these formalisms, we prove the nondegeneracy of rhombus central configurations of planar $4$-body problem for any mass.

\section{Degeneracy}\label{sec3}

\subsection{Definition of degeneracy in general} 

\begin{definition}[Degeneracy] Let $F:\mathbf{R}^N\times \mathbf{R}^M\rightarrow \mathbf{R}^N$ be a smooth function.  Assume that $F(q_0, m_0)=0$. 
The function $F$ is said to be {\bf degenerate } at the root $(q_0, m_0)\in \mathbf{R}^N\times \mathbf{R}^M$  if the differential of $F$ w.r.t. $q$ at $(q_0,m_0)$ is not full rank. That is, degeneracy occurs if the Jacobian  matrix $Jac(F)|_{(q_0,m_0)}$ of $F$ w.r.t. $q$ evaluated at $(q_0,m_0)$ has rank less than $N$. 

Root $(q_0, m_0)$ is called a {\bf bifurcation point} if there exists a small neighborhood around  $(q_0,m_0)$ in which the number of roots of the equation $F(x,m)=0$ changes as the parameter $m$ varies. 
\end{definition}
In general, if $(q_0, m_0)$ is a bifurcation point, then $F(q, m)$ is degenerate at $(q_0,m_0)$. Conversely, it is not always true. 

When we study central configuration, due to symmetry coming from the invariance of the equation \eqref{CCC} of central configuration with respect to translation, rotation and scaling, the degeneracy of the map defined by the left hand side of \eqref{CC1} is unavoidable.

Equivalently we can study the symmetry from the point of view of mappings. We define the diagonal $SO(2)$-action on $\mathbf{R}^{2N}\times\mathbf{R}^M $ and $\mathbf{R}^{2N}$ by
$$SO(2)\times (\mathbf{R}^{2N}\times\mathbf{R}^M)\rightarrow \mathbf{R}^{2N}\times \mathbf{R}^M, (A, (q,m))\mapsto (Aq, m),$$ and
$$SO(2)\times \mathbf{R}^{2N}\rightarrow \mathbf{R}^{2N}, (A, q)\mapsto Aq.$$
Then the mapping $F(q,m)$ is equivariant with respect to the $SO(2)$-action. However, when we restrict to central configuration (i.e., the level surface $F=0$, it is rotation invariant), so the rotation symmetry is always there.

Since the allowed symmetries depend on the concrete form of the equations of central configurations, we give all the possibilities for completeness. (1) fixing the center of mass to be the origin and killing scaling, rotation invariance gives us one trivial zero (\S 3.2); (2) fixing only the center of mass to be the origin, due to rotation and scaling invariance we have two trivial zeros (\S 3.3); (3) killing scaling, we have three zeros due to translation and rotation invariance (\S 3.4); (4) if we keep all symmetry, we have four zeros (\S 3.5).

When we talk about the degeneracy of a solution to a system of equations which possesses symmetries like the case at hand about the central configurations, we can only define the nondegeneracy modulus the symmetries. We will give the precise definitions in the following sections.


\subsection{Form I: One Trivial Zero Eigenvalue (Rotation Invariance)}
We first consider the most common used central configuration equations \eqref{CC0}. Here $F_i(q,m)$ is defined as follows. 
\begin{equation}\label{FCC0}
F_i(q,m)= \sum_{j=1,j\neq i}^N \frac{m_im_j(q_j-q_i)}{r_{ij}^3}+\lambda m_iq_i=0,\quad i=1,2,\cdots,N.
\end{equation}

{The system (\ref{FCC0}): $F(q,m)=[F_i(q,m)]^T=0$} is only invariant under rotation about a central configuration \( q_0 \) for \( m_0 \), but neither for translation because the center of mass is fixed at the origin, nor for scaling because the parameter $\lambda$ is some constant.

For the rotation invariance as did in \cite{Moeckel95} (p50), let $A$ be a family of rotation matrices
$$A(t)=\left[\begin{array}{cc}
   \cos(t)  & -\sin(t) \\
   \sin(t)  & \cos(t)
\end{array}\right]\in SO(2)$$ 
and $$Aq=\left[\begin{array}{c}Aq_1\\ Aq_2\\ \cdots\\ Aq_n\end{array}\right],$$ where $$Aq_i=A[x_i,y_i]^T=\left[\begin{array}{cc}
   \cos(t)  & -\sin(t) \\
   \sin(t)  & \cos(t)
\end{array}\right]\left[\begin{array}{c}
     x_i \\
    y_i
\end{array}\right].$$
From the rotation equivariance, 
\[
F(A(t)q, m) = A(t) F(q, m) \quad \text{for all } t\in\mathbb{R}.
\]  
Taking derivative on both sides with respect to $t$, we obtain
\[
\frac{dF(A(t)q,m)}{dt} = \text{Jac}(F)|_{(A(t)q,m)} A'(t)q = A'(t) F(q,m).
\]  
Evaluating at a central configuration $q_0$ for mass $m_0$ with $t=0$, we have
\[
\frac{dF(A(t)q_0,m_0)}{dt}|_{t=0} = \text{Jac}(F)|_{(q_0,m_0)} (A'(0)q_0) = A'(0) F(q_0,m_0)=0.
\] 
This shows that the Jacobian matrix $Jac(F)|_{(q_0,m_0)}$ has a zero eigenvalue with the corresponding eigenvector $A^\prime(0)q_0$
$$A^\prime(0)q_0=\left[\begin{array}{cc}
   0  & -1 \\
  1  & 0
\end{array}\right]q_0=\left[\begin{array}{c}
     -y_{10} \\
     x_{10}\\
     -y_{20}\\
     x_{20}\\
     \vdots\\
     -y_{N0}\\
     x_{N0}
\end{array}\right].$$

\begin{proposition}\label{ThmForm1FCC0}
    Let $q_0$ be a central configuration for mass $m_0$ defined by equation \eqref{FCC0}.  Let $$P =\left[\begin{array}{cc}
B_1 & 0 
\\
 B_2 & I
\end{array}\right],$$ where $\left[\begin{array}{c} B_1\\ B_2 \end{array}\right]=A'(0)q_0$ is constructed by the zero eigenvector corresponding to rotation, such that $P$ is invertible.  Then  $P^{-1}Jac(F)|_{(q_0,m_0)} P$ has the form$ \left[\begin{array}{cc}
0 & J_1
\\
0 & J_2
\end{array}\right]$ with $J_2$ a $(2N-1)\times (2N-1) $ matrix.  
\end{proposition}

\begin{remark}
Since $q_0$ is not identically zero, one can always choose such $P$ by suitably rotating the initial configuration $q_0$ taking advantage of the rotation invariance.
\end{remark}

\begin{definition}[Nondegeneracy of a Central Configuration (Form I)]
    A central configuration \( q_0 \) for \( m_0 \) is said to be nondegenerate if \( \det J_2 \neq 0 \) in Proposition \ref{ThmForm1FCC0}. Otherwise, it is considered to be degenerate.
\end{definition}


\subsection{Form II: Two Trivial Zero Eigenvalues (Rotation plus Scaling)}

It is convenient to work with the equivalent form of a central configuration, namely as a critical point of \( \sqrt{I} U \), with potential
\[
U(q) = \sum_{1\leq i<j\leq N} \frac{m_i m_j}{\|q_i - q_j\|}
\]  
and the moment of inertia \( I \) with respect to the center of mass at the origin, i.e., \( c = 0 \) 
\[
I(q) = \sum_{i=1}^{N} m_i \|q_i\|^2.
\]  
Then, the equations for a central configuration are given by  
\begin{equation}
     \sqrt{I} \frac{\partial U}{\partial q_i} + \frac{U}{\sqrt{I}} m_i q_i=0 \hbox{ for } i=1,2,\dots,N,
\end{equation}
{which is equivalent to
\begin{equation}\label{CF2}
F_i(q,m) :=\sum_{j=1,j\not=i}^{N}
\frac{m_im_j(q_j-q_i) }{|q_j-q_i|^3}+\frac{U}{I}
m_iq_i =0 \hbox{ for } i=1,2,\dots,N.
\end{equation}
}

{The system (\ref{CF2}): $F(q,m)=[F_i(q,m)]^T=0$} is invariant under rotation and scaling about a central configuration \( q_0 \) for \( m_0 \), but not for translation because the center of mass is fixed at the origin.


For the rotation invariance as did in Form I, the Jacobian matrix $Jac(F)|_{(q_0,m_0)}$ has a zero eigenvalue with the corresponding eigenvector $A^\prime(0)q_0$.

Concerning the scaling invariance of $F$, in fact,  we have
\[
F(tq, m) = \frac{1}{t} F(q, m) \quad \text{for all } t > 0.
\]  
Taking differentials on both sides with respect to $t$ and applying the chain rule, we obtain  
\[
\frac{dF(tq,m)}{dt} = \text{Jac}(F)|_{(tq,m)} q = -\frac{1}{t^2} F(q,m).
\]  
Evaluating at $ q_0,\,  m_0,\,  t=1 $, we get  
\[
\left.\frac{dF(tq_0,m_0)}{dt}\right|_{t=1} = \text{Jac}(F)|_{(q_0,m_0)} q_0 =-F(q_0,m_0)= 0.
\]  
Thus, the central configuration vector itself is an eigenvector corresponding to the zero eigenvalue.  

We summarize our results in the following proposition.
\begin{proposition}\label{ThmForm2}
    Let $q_0$ be a central configuration of $m_0$ defined by equation \eqref{CF2}.  Let $$P =\left[\begin{array}{cc}
B_1 & 0 
\\
 B_2 & I
\end{array}\right],$$ where $[B_1,B_2]^T=[A'(0)q_0, q_0]$ is constructed by the two zero eigenvectors corresponding to rotation and scaling, such that $P$ is invertible.  Then  $P^{-1}Jac(F)|_{(q_0,m_0)} P$ has the form$ \left[\begin{array}{cc}
0 & J_1
\\
0 & J_2
\end{array}\right]$ with $J_2$ a $(2N-2)\times (2N-2) $ matrix.  
\end{proposition}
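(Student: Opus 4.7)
The plan is to use directly the two zero eigenvectors $v_1 := A'(0)q_0$ and $v_2 := q_0$ that were identified just above from the rotation and scaling equivariance of $F$. By the construction of $P$, the first two columns of $P$ are precisely the stackings of $v_1$ and $v_2$: the block $B_1$ collects their top two components, and $B_2$ collects the remaining $2N-2$ components, while the right-hand block column of $P$ is $\bigl(\begin{smallmatrix}0\\ I\end{smallmatrix}\bigr)$ and so contributes no constraint.

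First I would form the product $Jac(F)|_{(q_0,m_0)}\,P$ one column at a time. Since the $k$-th column of a matrix product is the matrix applied to the $k$-th column of the second factor, columns $1$ and $2$ of $Jac(F)|_{(q_0,m_0)}\,P$ equal $Jac(F)|_{(q_0,m_0)}\,v_1 = 0$ and $Jac(F)|_{(q_0,m_0)}\,v_2 = 0$. Hence that product has zero in its first two columns. Left-multiplication by $P^{-1}$ preserves which columns are zero, so $P^{-1} Jac(F)|_{(q_0,m_0)} P$ also has zero in its first two columns, which, partitioned conformally with $P$, gives the asserted block form with $J_1$ of size $2\times(2N-2)$ and $J_2$ of size $(2N-2)\times(2N-2)$.

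The only point that deserves care is the invertibility hypothesis on $P$. By the block-triangular structure one has $\det P = \det B_1$, so $P$ is invertible precisely when the $2\times 2$ matrix $B_1$ formed by the top components of $v_1$ and $v_2$ is nonsingular. This is a mild and generically satisfied condition and is the only potential obstacle in the argument; should it fail for some coordinate choice, one simply relabels the particles to pick a more convenient pair of pivot coordinates, and the same reasoning applies. Beyond this bookkeeping the proof is essentially a one-line linear-algebra observation, and the substance of the proposition lies less in the proof than in the consequence: nondegeneracy of the central configuration modulo rotation and scaling will be characterized by nonsingularity of the reduced block $J_2$.
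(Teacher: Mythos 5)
Your proof is correct and follows essentially the same route as the paper: the two kernel vectors $A'(0)q_0$ and $q_0$ are derived from rotation and scaling equivariance immediately before the proposition, and the block form then follows from the elementary column-by-column observation you make explicit. Your added remark that $\det P=\det B_1$ (so invertibility amounts to the first body not sitting at the center of mass, fixable by relabeling) is a correct and useful clarification that the paper leaves implicit.
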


\begin{definition}[Nondegeneracy of a Central Configuration (Form II)]
    A central configuration \( q_0 \) for \( m_0 \) is said to be nondegenerate if \( \det J_2 \neq 0 \) in Proposition \ref{ThmForm2}. Otherwise, it is considered to be degenerate.
\end{definition}

\begin{example}[Square Central Configuration]
    The square $q_0=[1,0, 0,1, -1,0, 0, -1]^T$ is a central configuration for equal masses $m_0=[1,1,1,1]$ with $\lambda=\frac{1}{4}+\frac{\sqrt{2}}{2}$ and center of mass at origin $c=[0,0]^T$.   Then the matrix $P$ is constructed as 
\[  P=   \left[\begin{array}{cccccccc}
0 & 1 & 0 & 0 & 0 & 0 & 0 & 0 
\\
 1 & 0 & 0 & 0 & 0 & 0 & 0 & 0 
\\
 -1 & 0 & 1 & 0 & 0 & 0 & 0 & 0 
\\
 0 & 1 & 0 & 1 & 0 & 0 & 0 & 0 
\\
 0 & -1 & 0 & 0 & 1 & 0 & 0 & 0 
\\
 -1 & 0 & 0 & 0 & 0 & 1 & 0 & 0 
\\
 1 & 0 & 0 & 0 & 0 & 0 & 1 & 0 
\\
 0 & -1 & 0 & 0 & 0 & 0 & 0 & 1 
\end{array}\right].
\]

\[
P^{-1} Jac(F)|_{(q_0,m_0)} P= \] \[ 
\left[\begin{array}{cccccccc}
 0 & 0 & \frac{3}{4} & -\frac{1}{4} & 0 & \frac{\sqrt{2}}{8} & -\frac{3}{4} & -\frac{1}{4} 
\\
0 & 0 & -\frac{1}{4} & -\frac{3 \sqrt{2}}{16} & \frac{3}{4}-\frac{\sqrt{2}}{16} & 0 & -\frac{1}{4} & \frac{3 \sqrt{2}}{16} 
\\
 0 & 0 & \frac{9}{4}+\frac{\sqrt{2}}{8} & -\frac{1}{4} & -\frac{1}{4} & -\frac{3}{4}+\frac{\sqrt{2}}{8} & -\frac{3}{4}+\frac{\sqrt{2}}{8} & -\frac{1}{4} 
\\
 0 & 0 & \frac{1}{4} & \frac{3}{4}+\frac{\sqrt{2}}{2} & \frac{\sqrt{2}}{4}-\frac{3}{4} & -\frac{1}{4} & \frac{1}{4} & \frac{3}{4}-\frac{\sqrt{2}}{4} 
\\
 0 & 0 & -\frac{1}{2} & 0 & \frac{3}{2}+\frac{\sqrt{2}}{4} & 0 & -\frac{1}{2} & 0 
\\
 0 & 0 & 0 & -\frac{1}{2} & 0 & \frac{3}{2}+\frac{\sqrt{2}}{4} & 0 & -\frac{1}{2} 
\\
 0 & 0 & -\frac{3}{4}+\frac{\sqrt{2}}{8} & \frac{1}{4} & -\frac{1}{4} & \frac{3}{4}-\frac{\sqrt{2}}{8} & \frac{9}{4}+\frac{\sqrt{2}}{8} & \frac{1}{4} 
\\
 0 & 0 & -\frac{1}{4} & \frac{3}{4}-\frac{\sqrt{2}}{4} & \frac{3}{4}-\frac{\sqrt{2}}{4} & -\frac{1}{4} & -\frac{1}{4} & \frac{3}{4}+\frac{\sqrt{2}}{2} 
\end{array}\right].
\]
\[
J_2=\left[\begin{array}{cccccc}
\frac{9}{4}+\frac{\sqrt{2}}{8} & -\frac{1}{4} & -\frac{1}{4} & -\frac{3}{4}+\frac{\sqrt{2}}{8} & -\frac{3}{4}+\frac{\sqrt{2}}{8} & -\frac{1}{4} 
\\
 \frac{1}{4} & \frac{3}{4}+\frac{\sqrt{2}}{2} & \frac{\sqrt{2}}{4}-\frac{3}{4} & -\frac{1}{4} & \frac{1}{4} & \frac{3}{4}-\frac{\sqrt{2}}{4} 
\\
 -\frac{1}{2} & 0 & \frac{3}{2}+\frac{\sqrt{2}}{4} & 0 & -\frac{1}{2} & 0 
\\
 0 & -\frac{1}{2} & 0 & \frac{3}{2}+\frac{\sqrt{2}}{4} & 0 & -\frac{1}{2} 
\\
 -\frac{3}{4}+\frac{\sqrt{2}}{8} & \frac{1}{4} & -\frac{1}{4} & \frac{3}{4}-\frac{\sqrt{2}}{8} & \frac{9}{4}+\frac{\sqrt{2}}{8} & \frac{1}{4} 
\\
 -\frac{1}{4} & \frac{3}{4}-\frac{\sqrt{2}}{4} & \frac{3}{4}-\frac{\sqrt{2}}{4} & -\frac{1}{4} & -\frac{1}{4} & \frac{3}{4}+\frac{\sqrt{2}}{2} 
\end{array}\right].
\]
The determinate of $J_2$ is $\frac{459}{32}+\frac{3249 \sqrt{2}}{256}\not=0$ , which shows that the square central configuration is not degenerate. 
    \end{example}

    \begin{example}[Equilateral triangle plus one at center]
Let us consider the configuration with three bodies with mass 1 at the vertices of an equilateral triangle and a fourth body with mass $m_4$ at the center of the triangle. It is well-known that this is a central configuration.  
$$q_0= [1, 0, -\frac{1}{2}, \frac{\sqrt{3}}{2}, -\frac{1}{2},-\frac{\sqrt{3}}{2}, 0,0]^T \hbox{ and } m_0=[1,1,1,m_4].$$
\[P=\left[\begin{array}{cccccccc}
0 & 1 & 0 & 0 & 0 & 0 & 0 & 0 
\\
 1 & 0 & 0 & 0 & 0 & 0 & 0 & 0 
\\
-\frac{\sqrt{3}}{2} & -\frac{1}{2} &  1 & 0 & 0 & 0 & 0 & 0 
\\
-\frac{1}{2} & \frac{\sqrt{3}}{2} &  0 & 1 & 0 & 0 & 0 & 0 
\\
 \frac{\sqrt{3}}{2} &-\frac{1}{2} &  0 & 0 & 1 & 0 & 0 & 0 
\\
-\frac{1}{2} & -\frac{\sqrt{3}}{2} &  0 & 0 & 0 & 1 & 0 & 0 
\\
 0 & 0 & 0 & 0 & 0 & 0 & 1 & 0 
\\
 0 & 0 & 0 & 0 & 0 & 0 & 0 & 1 
\end{array}\right].
\]
\[
P^{-1} Jac(F)|_{(q_0,m_0)} P= \]

\resizebox{\textwidth}{!}{
$
\left[\begin{array}{cccccccc}
0 & 0 & \frac{\sqrt{6}}{8} & \frac{\sqrt{2}}{24} & -\frac{\sqrt{6}}{8} & \frac{\sqrt{2}}{24} & 0 & \frac{\sqrt{6}\, m_4}{2} 
\\
0 & 0 & \frac{\sqrt{2}}{24}+\frac{\sqrt{3}\, \sqrt{2}\, m_4}{4} & -\frac{\left(\sqrt{3}+6 m_4 \right) \sqrt{2}}{8} & \frac{\sqrt{2}}{24}+\frac{\sqrt{3}\, \sqrt{2}\, m_4}{4} & \frac{\left(\sqrt{3}+6 m_4 \right) \sqrt{2}}{8} & -\sqrt{6}\, m_4 & 0 
\\
0 & 0 & \frac{5 \sqrt{2}}{8}+\frac{3 \sqrt{3}\, \sqrt{2}\, m_4}{8} & -\frac{\left(\sqrt{3}+27 m_4 \right) \sqrt{2}}{24} & -\frac{\sqrt{2}}{8} & -\frac{\sqrt{6}}{24} & -\frac{3 \sqrt{6}\, m_4}{8} & \frac{15 \sqrt{2}\, m_4}{8} 
\\
0 & 0 & \frac{\left(\sqrt{3}-27 m_4 \right) \sqrt{2}}{24} & \frac{5 \sqrt{2}}{8}+\frac{9 \sqrt{3}\, \sqrt{2}\, m_4}{8} & \frac{\sqrt{6}}{24} & -\frac{\sqrt{2}}{8} & \frac{21 \sqrt{2}\, m_4}{8} & -\frac{3 \sqrt{6}\, m_4}{8} 
\\
0 & 0 & -\frac{\sqrt{2}}{8} & \frac{\sqrt{6}}{24} & \frac{5 \sqrt{2}}{8}+\frac{3 \sqrt{3}\, \sqrt{2}\, m_4}{8} & \frac{\left(\sqrt{3}+27 m_4 \right) \sqrt{2}}{24} & -\frac{3 \sqrt{6}\, m_4}{8} & -\frac{15 \sqrt{2}\, m_4}{8} 
\\
0 & 0 & -\frac{\sqrt{6}}{24} & -\frac{\sqrt{2}}{8} & -\frac{\left(\sqrt{3}-27 m_4 \right) \sqrt{2}}{24} & \frac{5 \sqrt{2}}{8}+\frac{9 \sqrt{3}\, \sqrt{2}\, m_4}{8} & -\frac{21 \sqrt{2}\, m_4}{8} & -\frac{3 \sqrt{6}\, m_4}{8} 
\\
0 & 0 & \frac{\sqrt{6}\, m_4}{8} & \frac{9 \sqrt{2}\, m_4}{8} & \frac{\sqrt{6}\, m_4}{8} & -\frac{9 \sqrt{2}\, m_4}{8} & \frac{\sqrt{2}\, m_4 \left(2 m_4 \sqrt{3}+3 \sqrt{3}+2\right)}{4} & 0 
\\
0 & 0 & \frac{9 \sqrt{2}\, m_4}{8} & -\frac{5 \sqrt{6}\, m_4}{8} & -\frac{9 \sqrt{2}\, m_4}{8} & -\frac{5 \sqrt{6}\, m_4}{8} & 0 & \frac{\sqrt{2}\, m_4 \left(2 m_4 \sqrt{3}+3 \sqrt{3}+2\right)}{4} 
\end{array}\right]
$
}

$$det(J_2)=\frac{\left(133-60 \sqrt{3}\right) \left(\sqrt{3}+3 m_4 \right)^{2} \left(-249 m_4 +81+64 \sqrt{3}\right)^{2} m_4^{2}}{881792}.
$$
The equilateral triangle plus one at center is nondegenerate for $m_4\not= \frac{81+64 \sqrt{3}}{249}$ and it becomes degenerate when $m_4= \frac{81+64 \sqrt{3}}{249}$.

\end{example}

\subsection{Form III: Three Trivial Zero Eigenvalues (Rotation plus translation)}

Let $F:\mathbf{R}^{2N}\times \mathbf{R}^{N}\rightarrow \mathbf{R}^{2N},\,\, F(q,m):=[F_i(q,m)]^T$ with 
\begin{equation}\label{CF1}
F_i(q,m)=\sum_{j=1,j\not=i}^{N}
\frac{m_im_j(q_j-q_i) }{|q_j-q_i|^3}+\lambda
m_i(q_i-c)  \hspace{1cm} 1\leq i\leq N,
\end{equation}
where $q_i=[x_i, y_i]^T$, $\lambda$ is a constant and $c$ should read as $c=\frac{\sum_{i=1}^N m_iq_i}{\sum_{i=1}^N m_i}$. The equations of central configuration \eqref{CC1} are equivalent to $F(q,m)=0$.  Assume that $F(q_0, m_0)=0$, i.e. if $q_0$ is a central configuration for $m_0$, then constant $\lambda= \frac{U(q_0,m_0)}{I(q_0,m_0)}$ which means that we have killed the scaling symmetry. Potential $U$ and inertial $I$ are defined as in \eqref{NU} and \eqref{NI}.  Then the Jacobian matrix $Jac(F)|_{(q_0,m_0)}$ of function \eqref{CF1} for central configuration $(q_0,m_0)$ is computed as
\begin{equation}\label{Jac1}
    Jac(F)|_{(q_0,m_0)}=\left[\begin{array}{cccccc}
        \frac{\partial F_1}{\partial x_1} & \frac{\partial F_1}{\partial y_1} & \frac{\partial F_1}{\partial x_2} & \cdots & \frac{\partial F_1}{\partial x_n} &\frac{\partial F_1}{\partial y_n}\\
          \frac{\partial F_2}{\partial x_1} & \frac{\partial F_2}{\partial y_1} & \frac{\partial F_2}{\partial x_2} & \cdots & \frac{\partial F_2}{\partial x_n} &\frac{\partial F_2}{\partial y_n}\\
          & \cdots & & \cdots &  &\cdots \\
          \frac{\partial F_n}{\partial x_1} & \frac{\partial F_n}{\partial y_1} & \frac{\partial F_n}{\partial x_2} & \cdots & \frac{\partial F_n}{\partial x_n} &\frac{\partial F_n}{\partial y_n}\\
          
    \end{array}\right]_{(q_0,m_0).}
\end{equation}

The system (\ref{CC1}): $F(q,m)=[F_i(q,m)]^T=0$  is invariant under translation and rotation. 
As did in the previous subsection and using the same notations, \( A'(0) q_0 \)  is also an eigenvector of the Jacobian matrix at central configuration $q_0$ for mass $m_0$. 

Let \( v_0 = \begin{bmatrix} v_{x0} \\ v_{y0}\\\vdots\\ v_{x0} \\ v_{y0}\\\end{bmatrix} \) be any vector in \( \mathbb{R}^{2N} \). Then,  for any configuration $q$, mass $m$ and $t$
$$ F(q+tv_0, m)=F(q, m)$$
with \( q + t v_0 \) given by  
\[
q + t v_0 = \begin{bmatrix} x_{1} + t v_{x0} \\ y_{1} + t v_{y0} \\ \vdots \\ y_{N} + t v_{y0} \end{bmatrix}, 
\]
Taking the derivative with respect to $t$ at $t=0$ and evaluating at central configuration $q_0$ for mass $m_0$, we obtain
$$\left.\frac{dF(q+tv_0,m)}{dt}\right|_{t=0, q_0,m_0}= Jac(F)|_{(q_0,m_0)}v_0=0.$$
Notice that this fact is true for any configuration.

 This shows that the Jacobian matrix $Jac(F)|_{(q_0,m_0)}$ has two zero eigenvalues with the corresponding eigenvectors having free choice of $v_0$. 
 
Therefore, $Jac(F)|_{(q_0,m_0)}$ has three trivial eigenvalues and the corresponding eigenvectors coming from rotation and translations. Based on above analysis, we have the following proposition.  \\
\begin{proposition}\label{ThmForm1}
    Let $q_0$ be a central configuration of $m_0$ defined by equation \eqref{CF1}.  Let $$P =\left[\begin{array}{cc}
B_1 & 0 
\\
 B_2 & I
\end{array}\right],$$ where $[B_1,B_2]^T$ is constructed by the three zero eigenvectors corresponding to rotation and translation, such that $P$ is invertible.  Then  $P^{-1}Jac(F)|_{(q_0,m_0)} P$ has the form$ \left[\begin{array}{cc}
0 & J_1
\\
0 & J_2
\end{array}\right]$ with  $J_2$ a $(2N-3)\times (2N-3) $ matrix.
\end{proposition}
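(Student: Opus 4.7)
The plan is to exploit directly the three explicit zero eigenvectors of $J := \mathrm{Jac}(F)|_{(q_0,m_0)}$ that have already been identified from the symmetries just before the statement. First I would collect them: the rotation generator
\begin{equation*}
v_r := A'(0) q_0 = (-y_{10},\, x_{10},\, \dots,\, -y_{N0},\, x_{N0})^T,
\end{equation*}
together with the two translation generators obtained by specializing the uniform shift $v_0$ to $(v_{x0},v_{y0})=(1,0)$ and $(0,1)$:
\begin{equation*}
v_x := (1,0,1,0,\dots,1,0)^T, \qquad v_y := (0,1,0,1,\dots,0,1)^T.
\end{equation*}
Stacking $v_r, v_x, v_y$ as the three columns of a $2N\times 3$ matrix and splitting it into its top three rows and its bottom $2N-3$ rows yields exactly the blocks $B_1$ and $B_2$ appearing in the statement. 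The hypothesis that $P$ is invertible is then equivalent to $B_1$ being invertible, i.e.\ to the freedom to select three of the $2N$ coordinates on which $v_r, v_x, v_y$ remain linearly independent.

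The core of the proof is a one-line block identity. By construction the first three columns of $P$ are $v_r, v_x, v_y$, each of which lies in $\ker J$. Hence the first three columns of the product $JP$ vanish identically, and since left multiplication by $P^{-1}$ acts column by column, the first three columns of $P^{-1}JP$ vanish as well. Partitioning the resulting $2N\times 2N$ matrix into $3+(2N-3)$ rows and columns gives
\begin{equation*}
P^{-1} J P = \begin{bmatrix} 0 & J_1 \\ 0 & J_2 \end{bmatrix},
\end{equation*}
with $J_1$ of size $3\times(2N-3)$ and $J_2$ of size $(2N-3)\times(2N-3)$, which is the claim. Using the explicit inverse $P^{-1} = \begin{bmatrix} B_1^{-1} & 0 \\ -B_2 B_1^{-1} & I \end{bmatrix}$, the blocks $J_1$ and $J_2$ are obtained concretely by applying $J$ to the last $2N-3$ columns of $P$ (namely $\begin{bmatrix} 0 \\ I \end{bmatrix}$) and multiplying by $P^{-1}$ on the left.

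No genuine analytic difficulty is involved; the result is structural. The only delicate point is combinatorial: for configurations with extra symmetry the naturally ordered top block $B_1$ may accidentally be singular, in which case one permutes coordinates (equivalently, relabels the bodies or swaps $x/y$ axes for some particles) to restore invertibility of $B_1$. Once such a permutation is fixed, the block form above is automatic, and the genuine degeneracy of $q_0$ is then detected entirely by $\det J_2$, which is the content of the subsequent definition.
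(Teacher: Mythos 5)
Your proof is correct and follows essentially the same route as the paper, which treats the proposition as an immediate consequence of the preceding analysis: the three explicit kernel vectors $A'(0)q_0$, $(1,0,\dots,1,0)^T$, $(0,1,\dots,0,1)^T$ form the first three columns of $P$, so the first three columns of $\mathrm{Jac}(F)P$, and hence of $P^{-1}\mathrm{Jac}(F)P$, vanish, giving the stated block form. Your supplementary remarks (invertibility of $P$ being equivalent to invertibility of $B_1$, and the explicit block formula for $P^{-1}$) are accurate and consistent with the paper's typical choice of $[B_1,B_2]^T$.
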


\begin{remark}
    A typical example of the matrix 
    $$\left[\begin{array}{c}
B_1 
\\
 B_2
\end{array}\right]=\left[\begin{array}{ccc}
1 & 0 &-y_{10} 
\\
0 & 1 & x_{10} 
 \\
 1 & 0 & -y_{20} 
\\
 0 & 1 & x_{20} 
 \\
 \vdots & \vdots  &\vdots\\
 1 & 0 & -y_{N0} 
\\
  0 & 1 & x_{N0} 
 \\
 
\end{array}\right],$$ 
 which is used in our computation program in the following examples. Again one can always choose $q_0$ properly to make $P$ nondegenarate.    

\end{remark}
Based on Proposition \ref{ThmForm1}, we define the nondegeneracy of a central configuration as follows:  

\begin{definition}[Nondegeneracy of a Central Configuration (Form III)]
    A central configuration \( q_0 \) for \( m_0 \) is said to be nondegenerate if \( \det J_2 \neq 0 \) in Proposition \ref{ThmForm1}. Otherwise, it is degenerate.
\end{definition}

\begin{example}[Square Central Configuration]
    The square $q_0=[1,0, 0,1, -1,0, 0, -1]^T$ is a central configuration for equal masses $m_0=[1,1,1,1]$ with $\lambda=\frac{1}{4}+\frac{\sqrt{2}}{2}$ and center of mass at origin $c=[0,0]^T$.   The Jaconbian matrix at $(q_0,m_0)$  is $Jac(F)|_{(q_0,m_0)}= $

\[
\setlength{\arraycolsep}{2pt} 
\begin{bmatrix}
\frac{5 \sqrt{2}}{8} + \frac{7}{16} & 0 & -\frac{\sqrt{2}}{4} - \frac{1}{16} & \frac{3 \sqrt{2}}{8} & -\frac{5}{16} - \frac{\sqrt{2}}{8} & 0 & -\frac{\sqrt{2}}{4} - \frac{1}{16} & -\frac{3 \sqrt{2}}{8} \\
0 & \frac{5 \sqrt{2}}{8} + \frac{1}{16} & \frac{3 \sqrt{2}}{8} & -\frac{\sqrt{2}}{4} - \frac{1}{16} & 0 & \frac{1}{16} - \frac{\sqrt{2}}{8} & -\frac{3 \sqrt{2}}{8} & -\frac{\sqrt{2}}{4} - \frac{1}{16} \\
-\frac{\sqrt{2}}{4} - \frac{1}{16} & \frac{3 \sqrt{2}}{8} & \frac{5 \sqrt{2}}{8} + \frac{1}{16} & 0 & -\frac{\sqrt{2}}{4} - \frac{1}{16} & -\frac{3 \sqrt{2}}{8} & \frac{1}{16} - \frac{\sqrt{2}}{8} & 0 \\
\frac{3 \sqrt{2}}{8} & -\frac{\sqrt{2}}{4} - \frac{1}{16} & 0 & \frac{5 \sqrt{2}}{8} + \frac{7}{16} & -\frac{3 \sqrt{2}}{8} & -\frac{\sqrt{2}}{4} - \frac{1}{16} & 0 & -\frac{5}{16} - \frac{\sqrt{2}}{8} \\
-\frac{5}{16} - \frac{\sqrt{2}}{8} & 0 & -\frac{\sqrt{2}}{4} - \frac{1}{16} & -\frac{3 \sqrt{2}}{8} & \frac{5 \sqrt{2}}{8} + \frac{7}{16} & 0 & -\frac{\sqrt{2}}{4} - \frac{1}{16} & \frac{3 \sqrt{2}}{8} \\
0 & \frac{1}{16} - \frac{\sqrt{2}}{8} & -\frac{3 \sqrt{2}}{8} & -\frac{\sqrt{2}}{4} - \frac{1}{16} & 0 & \frac{5 \sqrt{2}}{8} + \frac{1}{16} & \frac{3 \sqrt{2}}{8} & -\frac{\sqrt{2}}{4} - \frac{1}{16} \\
-\frac{\sqrt{2}}{4} - \frac{1}{16} & -\frac{3 \sqrt{2}}{8} & \frac{1}{16} - \frac{\sqrt{2}}{8} & 0 & -\frac{\sqrt{2}}{4} - \frac{1}{16} & \frac{3 \sqrt{2}}{8} & \frac{5 \sqrt{2}}{8} + \frac{1}{16} & 0 \\
-\frac{3 \sqrt{2}}{8} & -\frac{\sqrt{2}}{4} - \frac{1}{16} & 0 & -\frac{5}{16} - \frac{\sqrt{2}}{8} & \frac{3 \sqrt{2}}{8} & -\frac{\sqrt{2}}{4} - \frac{1}{16} & 0 & \frac{5 \sqrt{2}}{8} + \frac{7}{16}
\end{bmatrix}.
\]

Then 
$$p=\left[\begin{array}{cccccccc}
1 & 0 & 0 & 0 & 0 & 0 & 0 & 0 
\\
 0 & 1 & 1 & 0 & 0 & 0 & 0 & 0 
\\
 1 & 0 & -1 & 0 & 0 & 0 & 0 & 0 
\\
 0 & 1 & 0 & 1 & 0 & 0 & 0 & 0 
\\
 1 & 0 & 0 & 0 & 1 & 0 & 0 & 0 
\\
 0 & 1 & -1 & 0 & 0 & 1 & 0 & 0 
\\
 1 & 0 & 1 & 0 & 0 & 0 & 1 & 0 
\\
 0 & 1 & 0 & 0 & 0 & 0 & 0 & 1 
\end{array}\right]
 \hbox{ and }
P^{-1}= \left[\begin{array}{cccccccc}
1 & 0 & 0 & 0 & 0 & 0 & 0 & 0 
\\
 -1 & 1 & 1 & 0 & 0 & 0 & 0 & 0 
\\
 1 & 0 & -1 & 0 & 0 & 0 & 0 & 0 
\\
 1 & -1 & -1 & 1 & 0 & 0 & 0 & 0 
\\
 -1 & 0 & 0 & 0 & 1 & 0 & 0 & 0 
\\
 2 & -1 & -2 & 0 & 0 & 1 & 0 & 0 
\\
 -2 & 0 & 1 & 0 & 0 & 0 & 1 & 0 
\\
 1 & -1 & -1 & 0 & 0 & 0 & 0 & 1 
\end{array}\right]
$$

 $P^{-1} Jac(F)|_{(q_0,m_0)} P=$ \[ \left[\begin{array}{cccccccc}
 0 & 0 & 0 & \frac{3 \sqrt{2}}{8} & -\frac{5}{16}-\frac{\sqrt{2}}{8} & 0 & -\frac{\sqrt{2}}{4}-\frac{1}{16} & -\frac{3 \sqrt{2}}{8} 
\\
 0 & 0 & 0 & -\frac{5 \sqrt{2}}{8}-\frac{1}{16} & \frac{1}{4}-\frac{\sqrt{2}}{8} & \frac{1}{16}-\frac{\sqrt{2}}{2} & -\frac{\sqrt{2}}{4}+\frac{1}{8} & -\frac{1}{16}+\frac{\sqrt{2}}{8} 
\\
0 & 0 & 0 & \frac{3 \sqrt{2}}{8} & -\frac{1}{4}+\frac{\sqrt{2}}{8} & \frac{3 \sqrt{2}}{8} & -\frac{\sqrt{2}}{8}-\frac{1}{8} & -\frac{3 \sqrt{2}}{8} 
\\
 0 & 0 & 0 & \frac{5 \sqrt{2}}{4}+\frac{1}{2} & -\frac{1}{4}-\frac{\sqrt{2}}{4} & -\frac{1}{8}+\frac{\sqrt{2}}{4} & -\frac{1}{8}+\frac{\sqrt{2}}{4} & -\frac{1}{4}-\frac{\sqrt{2}}{4} 
\\
 0 & 0 & 0 & -\frac{3 \sqrt{2}}{4} & \frac{3}{4}+\frac{3 \sqrt{2}}{4} & 0 & 0 & \frac{3 \sqrt{2}}{4} 
\\
 0 & 0 & 0 & \frac{3 \sqrt{2}}{4} & -\frac{1}{2}+\frac{\sqrt{2}}{4} & \frac{3 \sqrt{2}}{2} & \frac{\sqrt{2}}{2}-\frac{1}{4} & -\frac{3 \sqrt{2}}{4} 
\\
 0 & 0 & 0 & -\frac{3 \sqrt{2}}{4} & \frac{1}{2}-\frac{\sqrt{2}}{4} & 0 & \sqrt{2}+\frac{1}{4} & \frac{3 \sqrt{2}}{4} 
\\
 0 & 0 & 0 & \frac{\sqrt{2}}{2}-\frac{1}{4} & \frac{\sqrt{2}}{2}-\frac{1}{4} & -\frac{1}{8}+\frac{\sqrt{2}}{4} & -\frac{1}{8}+\frac{\sqrt{2}}{4} & \frac{\sqrt{2}}{2}+\frac{1}{2} 
\end{array}\right].
\]

$$J_2=\left[\begin{array}{ccccc}
\frac{5 \sqrt{2}}{4}+\frac{1}{2} & -\frac{1}{4}-\frac{\sqrt{2}}{4} & -\frac{1}{8}+\frac{\sqrt{2}}{4} & -\frac{1}{8}+\frac{\sqrt{2}}{4} & -\frac{1}{4}-\frac{\sqrt{2}}{4} 
\\
 -\frac{3 \sqrt{2}}{4} & \frac{3}{4}+\frac{3 \sqrt{2}}{4} & 0 & 0 & \frac{3 \sqrt{2}}{4} 
\\
 \frac{3 \sqrt{2}}{4} & -\frac{1}{2}+\frac{\sqrt{2}}{4} & \frac{3 \sqrt{2}}{2} & \frac{\sqrt{2}}{2}-\frac{1}{4} & -\frac{3 \sqrt{2}}{4} 
\\
 -\frac{3 \sqrt{2}}{4} & \frac{1}{2}-\frac{\sqrt{2}}{4} & 0 & \sqrt{2}+\frac{1}{4} & \frac{3 \sqrt{2}}{4} 
\\
 \frac{\sqrt{2}}{2}-\frac{1}{4} & \frac{\sqrt{2}}{2}-\frac{1}{4} & -\frac{1}{8}+\frac{\sqrt{2}}{4} & -\frac{1}{8}+\frac{\sqrt{2}}{4} & \frac{\sqrt{2}}{2}+\frac{1}{2} 
\end{array}\right]
$$
$$\det(J_2)=\frac{999}{128}+\frac{1755 \sqrt{2}}{512}\not=0.$$
So the square central configuration for equal masses is nondegenerate.

\end{example}

\begin{example}[Equilateral triangle plus one at center]
Let us consider the configuration with three bodies with mass 1 at the vertices of an equilateral triangle and a fourth body with mass $m_4$ at the center of the triangle. It is well-known that this is a central configuration.  
$$q_0= [1, 0, -\frac{1}{2}, \frac{\sqrt{3}}{2}, -\frac{1}{2},-\frac{\sqrt{3}}{2}, 0,0]^T \hbox{ and } m_0=[1,1,1,m_4].$$
$\lambda=\frac{\sqrt{3}}{3}+m_4.$
The Jaconbian matrix at $(q_0,m_0)$  is $Jac(F)|_{(q_0,m_0)}= $ \\
\resizebox{\textwidth}{!}{$
\left[
\begin{array}{cccccccc}
\frac{\left(11 m_4 +27\right) \sqrt{3}+54 m_4^{2}+144 m_4}{54+18 m_4} & 0 & \frac{\left(-5 m_4 -27\right) \sqrt{3}-36 m_4}{108+36 m_4} & \frac{1}{4} & \frac{\left(-5 m_4 -27\right) \sqrt{3}-36 m_4}{108+36 m_4} & -\frac{1}{4} & -\frac{m_4 \left(18+9 m_4 +\sqrt{3}\right)}{9+3 m_4} & 0 \\
0 & \frac{\left(5 m_4 +9\right) \sqrt{3}-18 m_4}{54+18 m_4} & \frac{1}{4} & \frac{\left(m_4 -9\right) \sqrt{3}-36 m_4}{108+36 m_4} & -\frac{1}{4} & \frac{\left(m_4 -9\right) \sqrt{3}-36 m_4}{108+36 m_4} & 0 & -\frac{m_4 \left(-9+\sqrt{3}\right)}{9+3 m_4} \\
\frac{\left(-5 m_4 -27\right) \sqrt{3}-36 m_4}{108+36 m_4} & \frac{1}{4} & \frac{\left(13 m_4 +27\right) \sqrt{3}+27 m_4^{2}+45 m_4}{108+36 m_4} & -\frac{1}{4}-\frac{3 m_4 \sqrt{3}}{4} & \frac{m_4 \left(-9+\sqrt{3}\right)}{27+9 m_4} & 0 & -\frac{m_4 \left(-9+9 m_4 +4 \sqrt{3}\right)}{36+12 m_4} & \frac{3 m_4 \sqrt{3}}{4} \\
\frac{1}{4} & \frac{\left(m_4 -9\right) \sqrt{3}-36 m_4}{108+36 m_4} & -\frac{1}{4}-\frac{3 m_4 \sqrt{3}}{4} & \frac{\left(19 m_4 +45\right) \sqrt{3}+81 m_4^{2}+207 m_4}{108+36 m_4} & 0 & \frac{\left(-2 m_4 -9\right) \sqrt{3}-9 m_4}{27+9 m_4} & \frac{3 m_4 \sqrt{3}}{4} & -\frac{m_4 \left(45+27 m_4 +4 \sqrt{3}\right)}{36+12 m_4} \\
\frac{\left(-5 m_4 -27\right) \sqrt{3}-36 m_4}{108+36 m_4} & -\frac{1}{4} & \frac{m_4 \left(-9+\sqrt{3}\right)}{27+9 m_4} & 0 & \frac{\left(13 m_4 +27\right) \sqrt{3}+27 m_4^{2}+45 m_4}{108+36 m_4} & \frac{1}{4}+\frac{3 m_4 \sqrt{3}}{4} & -\frac{m_4 \left(-9+9 m_4 +4 \sqrt{3}\right)}{36+12 m_4} & -\frac{3 m_4 \sqrt{3}}{4} \\
-\frac{1}{4} & \frac{\left(m_4 -9\right) \sqrt{3}-36 m_4}{108+36 m_4} & 0 & \frac{\left(-2 m_4 -9\right) \sqrt{3}-9 m_4}{27+9 m_4} & \frac{1}{4}+\frac{3 m_4 \sqrt{3}}{4} & \frac{\left(19 m_4 +45\right) \sqrt{3}+81 m_4^{2}+207 m_4}{108+36 m_4} & -\frac{3 m_4 \sqrt{3}}{4} & -\frac{m_4 \left(45+27 m_4 +4 \sqrt{3}\right)}{36+12 m_4} \\
\frac{-18-9 m_4 -\sqrt{3}}{9+3 m_4} & 0 & \frac{9-9 m_4 -4 \sqrt{3}}{36+12 m_4} & \frac{3 \sqrt{3}}{4} & \frac{9-9 m_4 -4 \sqrt{3}}{36+12 m_4} & -\frac{3 \sqrt{3}}{4} & \frac{9+9 m_4 +2 \sqrt{3}}{6+2 m_4} & 0 \\
0 & \frac{9-\sqrt{3}}{9+3 m_4} & \frac{3 \sqrt{3}}{4} & \frac{-45-27 m_4 -4 \sqrt{3}}{36+12 m_4} & -\frac{3 \sqrt{3}}{4} & \frac{-45-27 m_4 -4 \sqrt{3}}{36+12 m_4} & 0 & \frac{9+9 m_4 +2 \sqrt{3}}{6+2 m_4}
\end{array}
\right]
$}

$$P=
\left[\begin{array}{cccccccc}
1 & 0 & 0 & 0 & 0 & 0 & 0 & 0 
\\
 0 & 1 & 1 & 0 & 0 & 0 & 0 & 0 
\\
 1 & 0 & -\frac{\sqrt{3}}{2} & 0 & 0 & 0 & 0 & 0 
\\
 0 & 1 & -\frac{1}{2} & 1 & 0 & 0 & 0 & 0 
\\
 1 & 0 & \frac{\sqrt{3}}{2} & 0 & 1 & 0 & 0 & 0 
\\
 0 & 1 & -\frac{1}{2} & 0 & 0 & 1 & 0 & 0 
\\
 1 & 0 & 0 & 0 & 0 & 0 & 1 & 0 
\\
 0 & 1 & 0 & 0 & 0 & 0 & 0 & 1 
\end{array}\right]
\hbox{ and } P^{-1}= 
\left[\begin{array}{cccccccc}
1 & 0 & 0 & 0 & 0 & 0 & 0 & 0 
\\
 -\frac{2 \sqrt{3}}{3} & 1 & \frac{2 \sqrt{3}}{3} & 0 & 0 & 0 & 0 & 0 
\\
 \frac{2 \sqrt{3}}{3} & 0 & -\frac{2 \sqrt{3}}{3} & 0 & 0 & 0 & 0 & 0 
\\
 \sqrt{3} & -1 & -\sqrt{3} & 1 & 0 & 0 & 0 & 0 
\\
 -2 & 0 & 1 & 0 & 1 & 0 & 0 & 0 
\\
 \sqrt{3} & -1 & -\sqrt{3} & 0 & 0 & 1 & 0 & 0 
\\
 -1 & 0 & 0 & 0 & 0 & 0 & 1 & 0 
\\
 \frac{2 \sqrt{3}}{3} & -1 & -\frac{2 \sqrt{3}}{3} & 0 & 0 & 0 & 0 & 1 
\end{array}\right]
$$
 $P^{-1} Jac(F)|_{(q_0,m_0)} P= $ \\
\resizebox{\textwidth}{!}{
$
\left[\begin{array}{cccccccc}
0 & 0 & 0 & \frac{1}{4} & \frac{(-5 m_4 -27) \sqrt{3}-36 m_4}{108+36 m_4} & -\frac{1}{4} & -\frac{m_4 (18+9 m_4 +\sqrt{3})}{9+3 m_4} & 0 
\\
0 & 0 & 0 & \frac{(-11 m_4 -45) \sqrt{3}-54 m_4^{2}-198 m_4}{108+36 m_4} & \frac{1}{4} & \frac{(7 m_4 +9) \sqrt{3}-36 m_4}{108+36 m_4} & \frac{3 m_4 \sqrt{3}}{2} & \frac{m_4 (45+9 m_4 -2 \sqrt{3})}{18+6 m_4} 
\\
0 & 0 & 0 & \frac{\sqrt{3}}{3}+\frac{3 m_4}{2} & -\frac{1}{2} & -\frac{\sqrt{3}}{6} & -\frac{3 m_4 \sqrt{3}}{2} & -\frac{3 m_4}{2} 
\\
0 & 0 & 0 & \frac{9 m_4}{2}+\sqrt{3} & -\frac{1}{2} & -\frac{\sqrt{3}}{2} & -\frac{3 m_4 \sqrt{3}}{2} & -\frac{9 m_4}{2} 
\\
0 & 0 & 0 & -\frac{3}{4}-\frac{3 m_4 \sqrt{3}}{4} & \frac{3 m_4}{4}+\frac{3 \sqrt{3}}{4} & \frac{3}{4}+\frac{3 m_4 \sqrt{3}}{4} & \frac{9 m_4}{2} & 0 
\\
0 & 0 & 0 & \frac{9 m_4}{4}+\frac{\sqrt{3}}{4} & -\frac{1}{4}+\frac{3 m_4 \sqrt{3}}{4} & \frac{9 m_4}{4}+\frac{\sqrt{3}}{4} & -3 m_4 \sqrt{3} & -\frac{9 m_4}{2} 
\\
0 & 0 & 0 & -\frac{1}{4}+\frac{3 \sqrt{3}}{4} & \frac{5 \sqrt{3}}{36}+\frac{1}{4} & \frac{1}{4}-\frac{3 \sqrt{3}}{4} & 3 m_4 +\frac{\sqrt{3}}{3}+\frac{3}{2} & 0 
\\
0 & 0 & 0 & \frac{11 \sqrt{3}}{36}+\frac{3 m_4}{2}-\frac{5}{4} & -\frac{1}{4}-\frac{3 \sqrt{3}}{4} & -\frac{7 \sqrt{3}}{36}-\frac{5}{4} & -\frac{3 m_4 \sqrt{3}}{2} & -\frac{3 m_4}{2}+\frac{\sqrt{3}}{3}+\frac{3}{2} 
\end{array}\right]
$ }

$$
J_2= \left[\begin{array}{ccccc}
\frac{9 m_4}{2}+\sqrt{3} & -\frac{1}{2} & -\frac{\sqrt{3}}{2} & -\frac{3 m_4 \sqrt{3}}{2} & -\frac{9 m_4}{2} \\
-\frac{3}{4}-\frac{3 m_4 \sqrt{3}}{4} & \frac{3 m_4}{4}+\frac{3 \sqrt{3}}{4} & \frac{3}{4}+\frac{3 m_4 \sqrt{3}}{4} & \frac{9 m_4}{2} & 0 \\
\frac{9 m_4}{4}+\frac{\sqrt{3}}{4} & -\frac{1}{4}+\frac{3 \sqrt{3}m_4}{4} & \frac{9 m_4}{4}+\frac{\sqrt{3}}{4} & -3 m_4 \sqrt{3} & -\frac{9 m_4}{2} \\
-\frac{1}{4}+\frac{3 \sqrt{3}}{4} & \frac{5 \sqrt{3}}{36}+\frac{1}{4} & \frac{1}{4}-\frac{3 \sqrt{3}}{4} & 3 m_4 +\frac{\sqrt{3}}{3}+\frac{3}{2} & 0 \\
\frac{11 \sqrt{3}}{36}+\frac{3 m_4}{2}-\frac{5}{4} & -\frac{1}{4}-\frac{3 \sqrt{3}}{4} & -\frac{7 \sqrt{3}}{36}-\frac{5}{4} & -\frac{3 m_4 \sqrt{3}}{2} & -\frac{3 m_4}{2}+\frac{\sqrt{3}}{3}+\frac{3}{2}
\end{array}\right]
$$

$$\det(J_2)= -\frac{\left(60 \sqrt{3}-133\right) \left(\sqrt{3}+3 m_4 \right) \left(-249 m_4 +81+64 \sqrt{3}\right)^{2}}{330672}
$$
The central configuration of the equilateral triangle plus one at center is nondegenerate for $m_4\not= \frac{81+64 \sqrt{3}}{249}$ and it becomes degenerate when $m_4= \frac{81+64 \sqrt{3}}{249}$.

\end{example}


\subsection{Form IV: Four Trivial Zero Eigenvalues}

Let $F(q,m)=[F_i(q,m)]$ be the function from $\mathbf{R}^{2N}\times \mathbf{R}^{N}\rightarrow \mathbf{R}^{2N}$ defined by
\begin{equation}\label{CF3}
F_i(q,m)=\sum_{j=1,j\not=i}^{N}
\frac{m_im_j(q_j-q_i) }{\|q_j-q_i\|^3}+\frac{U}{I}
m_i(q_i-c)  \hspace{1cm} 1\leq i\leq N,
\end{equation}
with $q_i=[x_i, y_i]^T$, the potential $U$ and the moment of inertia $I$ given by (\ref{NU}) and (\ref{NI}), namely
\begin{equation*}
U(q)=\sum_{1\leq i<j\leq N} \frac{m_im_j}{\|q_i-q_j\|},
\end{equation*}
and
\begin{equation*}
    I(q)= \sum_{i=1}^N m_i\|q_i-c\|^2,
\end{equation*}
and $c$ understood as $c=\frac{\sum_{i=1}^N m_iq_i}{\sum_{i=1}^N m_i}$.
The configuration $q_0$ is a central configuration for $m_0$ corresponds to $F(q_0,m_0)=0$. Then the translation, rotation and scaling of $q_0$  are also central configuration. Then there will be four eigenvectors corresponding to zero eigenvalues. 

\begin{proposition}\label{ThmForm3}
    Let $q_0$ be a central configuration of $m_0$ defined by equation \eqref{CF3}.  Let $$P =\left[\begin{array}{cc}
B_1 & 0 
\\
 B_2 & I
\end{array}\right],$$ where $[B_1,B_2]^T$ is constructed by the four zero eigenvectors corresponding to translation, rotation and scaling, such that $P$ is invertible. A typical $P$ looks like
\[
P= \left[\begin{array}{cccccccc}
1 & 0 & -y_1 & x_1 & 0 & 0 & 0 & 0 \\
0 & 1 & x_1  & y_1 & 0 & 0 & 0 & 0 \\
1 & 0 & -y_2 & x_2 & 0 & 0 & 0 & 0 \\
0 & 1 & x_2  & y_2 & 0 & 0 & 0 & 0 \\
1 & 0 & -y_3 & x_3 & 1 & 0 & 0 & 0 \\
0 & 1 & x_3  & y_3 & 0 & 1 & 0 & 0 \\
\cdots & & & \cdots & & & \cdots &\\
\cdots & & & \cdots & & & \cdots &\\
1 & 0 & -y_N & x_N & 0 & 0 & 1 & 0 \\
0 & 1 & x_N  & y_N & 0 & 0 & 0 & 1
\end{array}\right].
\]

  Then  $P^{-1}Jac(F)|_{(q_0,m_0)} P$ has the form$ \left[\begin{array}{cc}
0 & J_1
\\
0 & J_2
\end{array}\right]$ with $J_2$ a $(2N-4)\times (2N-4) $ matrix. 
\end{proposition}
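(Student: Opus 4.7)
The strategy mirrors Propositions \ref{ThmForm1} and \ref{ThmForm2}: exhibit four linearly independent zero eigenvectors of $\text{Jac}(F)|_{(q_0,m_0)}$ coming from the translation, rotation, and scaling invariances of \eqref{CF3}, install them as the first four columns of $P$, and then read off the required block form.

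First I would verify that each of the four symmetries is indeed a symmetry of the system \eqref{CF3}. Translation by $v=(v_0,\dots,v_0)^T$ leaves every pairwise difference $q_j-q_i$ and every centered vector $q_i-c(q)$ unchanged, and hence leaves $U$ and $I$ unchanged, so $F(q+tv,m)=F(q,m)$; differentiating at $t=0$ gives $\text{Jac}(F)|_{(q_0,m_0)}v=0$ and two independent choices of $v_0\in\mathbb{R}^2$ produce two independent zero eigenvectors. Rotation follows exactly as in Form~I: from $F(A(t)q,m)=\widetilde{A}(t)F(q,m)$ (with $\widetilde{A}$ acting block-diagonally) one deduces $\text{Jac}(F)|_{(q_0,m_0)}(A'(0)q_0)=0$. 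For scaling, direct substitution yields $U(tq)=t^{-1}U(q)$, $I(tq)=t^{2}I(q)$, $c(tq)=tc(q)$, whence $F(tq,m)=t^{-2}F(q,m)$; differentiating at $t=1$ and using $F(q_0,m_0)=0$ gives $\text{Jac}(F)|_{(q_0,m_0)}q_0=0$.

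With these four zero eigenvectors installed as the first four columns of $P$, and the trailing identity block guaranteeing invertibility, the first four columns of $\text{Jac}(F)\cdot P$ vanish, so $\text{Jac}(F)\cdot P=[\,0_{2N\times 4}\mid M\,]$ for some $2N\times(2N-4)$ matrix $M$. Left multiplication by $P^{-1}$ preserves the zero block, giving $P^{-1}\text{Jac}(F)P=[\,0_{2N\times 4}\mid P^{-1}M\,]$. Partitioning $P^{-1}M$ horizontally into its first four and last $2N-4$ rows yields the claimed form with $J_2$ a $(2N-4)\times(2N-4)$ matrix.

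The only delicate point, and the step I expect to be the main obstacle to record carefully, is the linear independence of the four symmetry eigenvectors, which is needed both for invertibility of $P$ and for $J_2$ to have the stated dimension. For a genuinely planar (non-collinear) configuration with all bodies distinct, the columns for $x$-translation, $y$-translation, rotation and scaling are visibly independent; invertibility of the specific $P$ displayed in the statement then follows from a direct cofactor expansion along the trailing identity block. A side remark worth recording but not an obstacle: a change in the last $2N-4$ columns of $P$ conjugates $J_2$ by an invertible block of $P^{-1}$, so the criterion $\det J_2\neq 0$ is independent of that choice and hence intrinsic to $(q_0,m_0)$.
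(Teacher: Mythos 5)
Your proposal is correct and follows essentially the same route the paper intends: the paper states Proposition \ref{ThmForm3} without a separate proof, relying on the translation argument from Form II and the rotation and scaling arguments from Form I to produce the four zero eigenvectors, after which the block form is immediate from placing them as the first four columns of $P$. Your homogeneity computation $F(tq,m)=t^{-2}F(q,m)$ is in fact the correct degree (the paper's Form I writes $t^{-1}$, a harmless slip since only $\mathrm{Jac}(F)q_0=-kF(q_0,m_0)=0$ is used), and your attention to linear independence of the four symmetry vectors (guaranteed once two bodies are distinct, hence always for a collision-free configuration) closes the one point the paper leaves implicit.
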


\begin{remark}
When studying the linear stability of the elliptic relative equilibria in the planar N-body problem, Meyer and Schmidt (\cite{Meyer2005}, Proposition 2.1) arrived at a similar formula in the phase space of the planar N-body problem. Here we work on the configuration space, the above statement is basically the same as the restriction to the configuration space of their result. See also \cite{HuSun2010}.  As remarked before, one can always choose $P$ invertible.
\end{remark}

\begin{definition}[Nondegeneracy of a Central Configuration (Form IV)]
    A central configuration \( q_0 \) for \( m_0 \) is said to be nondegenerate if \( \det J_2 \neq 0 \) in Proposition \ref{ThmForm3}. Otherwise, it is considered to be degenerate.
\end{definition}

\begin{example}[Square Central Configuration]
    The square $q_0=[0,0, 1,0, 1,1, 0, 1]^T$ is a central configuration for equal masses $m_0=[1,1,1,1]$. Note that the center of mass of this central configuration is not at the origin. \\
    $$P=\left[\begin{array}{cccccccc}
1 & 0 & 0 & 0 & 0 & 0 & 0 & 0 
\\
 0 & 1 & 0 & 0 & 0 & 0 & 0 & 0 
\\
 1 & 0 & 0 & 1 & 0 & 0 & 0 & 0 
\\
 0 & 1 & 1 & 0 & 0 & 0 & 0 & 0 
\\
 1 & 0 & -1 & 1 & 1 & 0 & 0 & 0 
\\
 0 & 1 & 1 & 1 & 0 & 1 & 0 & 0 
\\
 1 & 0 & -1 & 0 & 0 & 0 & 1 & 0 
\\
 0 & 1 & 0 & 1 & 0 & 0 & 0 & 1 
\end{array}\right]
\hbox{ and }
P^{-1}=\left[\begin{array}{cccccccc}
1 & 0 & 0 & 0 & 0 & 0 & 0 & 0 
\\
 0 & 1 & 0 & 0 & 0 & 0 & 0 & 0 
\\
 0 & -1 & 0 & 1 & 0 & 0 & 0 & 0 
\\
 -1 & 0 & 1 & 0 & 0 & 0 & 0 & 0 
\\
 0 & -1 & -1 & 1 & 1 & 0 & 0 & 0 
\\
 1 & 0 & -1 & -1 & 0 & 1 & 0 & 0 
\\
 -1 & -1 & 0 & 1 & 0 & 0 & 1 & 0 
\\
 1 & -1 & -1 & 0 & 0 & 0 & 0 & 1 
\end{array}\right].
$$
$$ P^{-1}Jac(F)|_{(q_0,m_0)} P=$$
$$\left[\begin{array}{cccccccc}
0 & 0 & 0 & 0 & -\frac{\sqrt{2}}{16}+\frac{1}{4} & \frac{3}{4}-\frac{3 \sqrt{2}}{16} & -\frac{1}{4}-\frac{5 \sqrt{2}}{16} & \frac{3}{4}+\frac{3 \sqrt{2}}{16} 
\\
 0 & 0 & 0 & 0 & \frac{3}{4}-\frac{3 \sqrt{2}}{16} & -\frac{\sqrt{2}}{16}+\frac{1}{4} & -\frac{3}{4}-\frac{3 \sqrt{2}}{16} & -\frac{7}{4}+\frac{\sqrt{2}}{16} 
\\
 0 & 0 & 0 & 0 & \frac{3 \sqrt{2}}{8} & -2+\frac{\sqrt{2}}{8} & \frac{3 \sqrt{2}}{8} & 2-\frac{\sqrt{2}}{8} 
\\
 0 & 0 & 0 & 0 & -\frac{\sqrt{2}}{4}-\frac{1}{2} & -\frac{3}{2} & \frac{1}{2}+\frac{\sqrt{2}}{4} & -\frac{3}{2} 
\\
 0 & 0 & 0 & 0 & 2+\sqrt{2} & -2+\frac{\sqrt{2}}{2} & -2+\frac{\sqrt{2}}{2} & 2-\frac{\sqrt{2}}{2} 
\\
 0 & 0 & 0 & 0 & \frac{\sqrt{2}}{4}-1 & 5+\frac{\sqrt{2}}{4} & 1-\frac{\sqrt{2}}{4} & 1-\frac{\sqrt{2}}{4} 
\\
 0 & 0 & 0 & 0 & -2+\frac{\sqrt{2}}{2} & -2+\frac{\sqrt{2}}{2} & 2+\sqrt{2} & 2-\frac{\sqrt{2}}{2} 
\\
 0 & 0 & 0 & 0 & \frac{\sqrt{2}}{4}-1 & 1-\frac{\sqrt{2}}{4} & 1-\frac{\sqrt{2}}{4} & 5+\frac{\sqrt{2}}{4} 
\end{array}\right].
$$
$$J_2=\left[\begin{array}{cccc}
2+\sqrt{2} & -2+\frac{\sqrt{2}}{2} & -2+\frac{\sqrt{2}}{2} & 2-\frac{\sqrt{2}}{2} 
\\
 \frac{\sqrt{2}}{4}-1 & 5+\frac{\sqrt{2}}{4} & 1-\frac{\sqrt{2}}{4} & 1-\frac{\sqrt{2}}{4} 
\\
 -2+\frac{\sqrt{2}}{2} & -2+\frac{\sqrt{2}}{2} & 2+\sqrt{2} & 2-\frac{\sqrt{2}}{2} 
\\
 \frac{\sqrt{2}}{4}-1 & 1-\frac{\sqrt{2}}{4} & 1-\frac{\sqrt{2}}{4} & 5+\frac{\sqrt{2}}{4} 
\end{array}\right].
$$
$$\det(J_2)=72+\frac{297 \sqrt{2}}{2}\not=0$$
which shows that the square central configuration is nondegenerate. 
 \end{example}


\subsection{Interrelations among various forms of CC}
Any solution to each form of the system of equations of central configurations generates a family of central configurations, and any two central configurations among the same family are equivalent. The corresponding family in each form gives the same equivalence class $[q]$ of central configurations.

More precisely, given a solution $q$ to the system (\ref{FCC0}), the $1$-parameter family it generates is $Aq$ with $A\in SO(2)$. Given a solution $q$ to the system (\ref{CF2}), the $2$-parameter family it generates is $kAq$ with $k\in\mathbb{R}$ and $A\in SO(2)$.
Given a solution $q$ to the system (\ref{CF1}), the corresponding $3$-parameter family will be $Aq+b$ with $A\in SO(2)$ and $b\in\mathbb{R}^2$. Finally for the Form IV (\ref{CF3}), the corresponding $4$-parameter family is $kAq+b$ with $k\in\mathbb{R}$, $A\in SO(2)$ and $b\in\mathbb{R}^2$.  



 \subsection{Non-degeneracy of Lagrange Central Configurations}
In 1772, Lagrange \cite{Lagrange1772} discovered that equilateral triangles form central configurations for any three positive masses $m_1, m_2, m_3$, and further proved that these are the only non-collinear central configurations in the three-body problem. While the uniqueness of the equilateral triangle configuration is well-known, the non-degeneracy of Lagrange's solutions for arbitrary masses is not immediately obvious. In this subsection, we rederive this property using the framework of Proposition \ref{ThmForm3}.

 \begin{proposition}\label{LaNonDeg}
     The central configurations of the Lagrange equilateral triangle $q_0= \left[1, 0, -\frac{1}{2},\right.$ $\left. \frac{\sqrt{3}}{2},   -\frac{1}{2}, -\frac{\sqrt{3}}{2}\right]$ are non-degenerate  for any three positive masses $m_0=[m_1,m_2,m_3].$
 \end{proposition}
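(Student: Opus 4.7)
The plan is to apply Proposition~\ref{ThmForm3} directly with $N=3$. Since $2N=6$, the Jacobian $\mathrm{Jac}(F)|_{(q_0,m_0)}$ is $6\times 6$, and after quotienting out the four trivial zero eigenvectors arising from the translation, rotation, and scaling symmetries, the reduced block $J_2$ is only $(2N-4)\times(2N-4)=2\times 2$. Hence nondegeneracy is reduced to verifying that a single determinant $\det(J_2)$, a rational function of $m_1,m_2,m_3$ (with coefficients in $\mathbb{Q}(\sqrt{3})$), is nonzero for every positive mass triple.

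First I would exploit the fact that at the equilateral configuration all three pairwise distances are equal to $\sqrt{3}$. A short direct computation using $\sum_{j\neq i}m_j(q_j-q_i)=-M(q_i-c)$ with $M=m_1+m_2+m_3$ shows that $\lambda=U/I=M/(3\sqrt{3})$ and that $F(q_0,m_0)=0$ regardless of the masses. This equal-distance collapse also removes all the $|q_i-q_j|$-dependence from the denominators when differentiating $F_i$, leaving entries of $\mathrm{Jac}(F)|_{(q_0,m_0)}$ that are polynomial in $m_1,m_2,m_3$ (after clearing a common factor of $3\sqrt{3}$). The one remaining subtlety is the derivative of the $(U/I)\,m_i(q_i-c)$ term; because $c$ depends on $m$ and $U,I$ depend on $q$, one must apply the product and quotient rules carefully at $q_0$.

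Next I would write down the explicit change-of-basis matrix $P$ of Proposition~\ref{ThmForm3} by plugging in the coordinates of $q_0=(1,0,-\tfrac12,\tfrac{\sqrt{3}}{2},-\tfrac12,-\tfrac{\sqrt{3}}{2})$; this gives a fixed $6\times 6$ matrix over $\mathbb{Q}(\sqrt{3})$ whose inverse is computable in closed form. Conjugating the Jacobian by $P$ and reading off the lower-right $2\times 2$ block yields $J_2(m_1,m_2,m_3)$. Finally I compute $\det(J_2)$ and verify it is nonvanishing on the positive octant. Guided by the full $S_3$ symmetry of the Lagrange problem under permutations of the labels, $\det(J_2)$ must be a symmetric rational function in $m_1,m_2,m_3$, and by analogy with the closed-form factorizations obtained for the square and for the equilateral-triangle-plus-center in the preceding examples, one expects $\det(J_2)$ to factor as a constant multiple of a manifestly positive symmetric polynomial such as $(m_1m_2+m_2m_3+m_3m_1)^2/M^{\,k}$ or a similar combination of the elementary symmetric functions $M$ and $\sigma_2=m_1m_2+m_2m_3+m_3m_1$.

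The main obstacle I expect is not conceptual but organisational: the symbolic bulk of both $\mathrm{Jac}(F)|_{(q_0,m_0)}$ and the conjugation $P^{-1}\mathrm{Jac}(F)P$ is substantial even after the equal-distance simplification, so in practice one will run the computation through a computer algebra system (as was done for the examples above). Once $\det(J_2)$ has been obtained in factored form, the positivity of every factor on $\{m_i>0\}$ is immediate and Proposition~\ref{LaNonDeg} follows from the Form~III definition of nondegeneracy.
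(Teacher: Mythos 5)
Your proposal follows essentially the same route as the paper: apply Proposition~\ref{ThmForm3} with $N=3$, conjugate the $6\times 6$ Jacobian by the explicit $P$ built from the four symmetry eigenvectors, and check that the resulting $2\times 2$ block $J_2$ has nonvanishing determinant; the paper's computation gives $\det(J_2)=\tfrac{1}{4}\left(m_1m_2+m_1m_3+m_2m_3\right)$, which is indeed a manifestly positive symmetric function as you anticipated (though of degree $2$ rather than the squared form $\sigma_2^2/M^k$ you guessed, and the Jacobian entries carry denominators proportional to $\sigma_2$ rather than being polynomial). This is a correct plan matching the paper's proof.
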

 {\bf Proof.} Here are the results from direct computations.\\
\[ Jac(F)|_{(q_0,m_0)}=\left[
\begin{array}{ccc}
\frac{m_2 m_3 \sqrt{3} (m_2 + m_3)}{(4 m_1 + 4 m_3) m_2 + 4 m_1 m_3} & 
\frac{-m_2 m_3 (m_2 - m_3)}{(4 m_1 + 4 m_3) m_2 + 4 m_1 m_3} & 
\frac{-\sqrt{3} m_2^2 m_3}{(4 m_1 + 4 m_3) m_2 + 4 m_1 m_3} \\[6pt]

\frac{-m_2 m_3 (m_2 - m_3)}{(4 m_1 + 4 m_3) m_2 + 4 m_1 m_3} & 
\frac{m_2 m_3 \sqrt{3} (4 m_1 + m_2 + m_3)}{(12 m_1 + 12 m_3) m_2 + 12 m_1 m_3} & 
\frac{m_2 m_3 (2 m_1 + m_2)}{(4 m_1 + 4 m_3) m_2 + 4 m_1 m_3} \\[6pt]

\frac{-\sqrt{3} m_1 m_2 m_3}{(4 m_2 + 4 m_3) m_1 + 4 m_2 m_3} & 
\frac{m_1 m_3 (2 m_1 + m_2)}{(4 m_2 + 4 m_3) m_1 + 4 m_2 m_3} & 
\frac{m_1 m_3 \sqrt{3} (m_1 + m_2)}{(4 m_2 + 4 m_3) m_1 + 4 m_2 m_3} \\[6pt]

\frac{-m_1 m_3 (m_2 + 2 m_3)}{(4 m_1 + 4 m_2) m_3 + 4 m_1 m_2} & 
\frac{-2 \sqrt{3} (m_1 - \frac{1}{2} m_2 + m_3) m_1 m_3}{(12 m_2 + 12 m_3) m_1 + 12 m_2 m_3} & 
\frac{-m_1 m_3 (m_1 - m_2)}{(4 m_2 + 4 m_3) m_1 + 4 m_2 m_3} \\[6pt]

\frac{-\sqrt{3} m_1 m_2 m_3}{(4 m_2 + 4 m_3) m_1 + 4 m_2 m_3} & 
\frac{-m_1 m_2 (2 m_1 + m_3)}{(4 m_2 + 4 m_3) m_1 + 4 m_2 m_3} & 
\frac{-\sqrt{3} m_1^2 m_2}{(4 m_2 + 4 m_3) m_1 + 4 m_2 m_3} \\[6pt]

\frac{m_1 m_2 (2 m_2 + m_3)}{(4 m_1 + 4 m_3) m_2 + 4 m_1 m_3} & 
\frac{-(2 m_1 + 2 m_2 - m_3) \sqrt{3} m_2 m_1}{(12 m_2 + 12 m_3) m_1 + 12 m_2 m_3} & 
\frac{-m_1 m_2 (m_1 + 2 m_2)}{(4 m_2 + 4 m_3) m_1 + 4 m_2 m_3}
\end{array}
\right.\] \[
\quad
\left.
\begin{array}{ccc}
\frac{-m_2 m_3 (m_2 + 2 m_3)}{(4 m_1 + 4 m_3) m_2 + 4 m_1 m_3} & 
\frac{-\sqrt{3} m_2 m_3^2}{(4 m_1 + 4 m_3) m_2 + 4 m_1 m_3} & 
\frac{m_2 m_3 (2 m_2 + m_3)}{(4 m_1 + 4 m_3) m_2 + 4 m_1 m_3} \\[6pt]

\frac{-2 \sqrt{3} (m_1 - \frac{1}{2} m_2 + m_3) m_2 m_3}{(12 m_1 + 12 m_3) m_2 + 12 m_1 m_3} & 
\frac{-m_2 m_3 (2 m_1 + m_3)}{(4 m_1 + 4 m_2) m_3 + 4 m_1 m_2} & 
\frac{-(2 m_1 + 2 m_2 - m_3) \sqrt{3} m_2 m_3}{(12 m_1 + 12 m_3) m_2 + 12 m_1 m_3} \\[6pt]

\frac{-m_1 m_3 (m_1 - m_2)}{(4 m_2 + 4 m_3) m_1 + 4 m_2 m_3} & 
\frac{-\sqrt{3} m_1^2 m_3}{(4 m_2 + 4 m_3) m_1 + 4 m_2 m_3} & 
\frac{-m_1 m_3 (m_1 + 2 m_2)}{(4 m_2 + 4 m_3) m_1 + 4 m_2 m_3} \\[6pt]

\frac{m_1 m_3 \sqrt{3} (m_1 + m_2 + 4 m_3)}{(12 m_2 + 12 m_3) m_1 + 12 m_2 m_3} & 
\frac{m_1 m_3 (m_1 + 2 m_3)}{(4 m_2 + 4 m_3) m_1 + 4 m_2 m_3} & 
\frac{m_1 m_3 \sqrt{3} (m_1 - 2 m_2 - 2 m_3)}{(12 m_2 + 12 m_3) m_1 + 12 m_2 m_3} \\[6pt]

\frac{m_1 m_2 (m_1 + 2 m_3)}{(4 m_2 + 4 m_3) m_1 + 4 m_2 m_3} & 
\frac{\sqrt{3} m_1 m_2 (m_1 + m_3)}{(4 m_2 + 4 m_3) m_1 + 4 m_2 m_3} & 
\frac{m_1 m_2 (m_1 - m_3)}{(4 m_2 + 4 m_3) m_1 + 4 m_2 m_3} \\[6pt]

\frac{\sqrt{3} m_1 m_2 (m_1 - 2 m_2 - 2 m_3)}{(12 m_2 + 12 m_3) m_1 + 12 m_2 m_3} & 
\frac{m_1 m_2 (m_1 - m_3)}{(4 m_2 + 4 m_3) m_1 + 4 m_2 m_3} & 
\frac{\sqrt{3} m_1 m_2 (m_1 + 4 m_2 + m_3)}{(12 m_2 + 12 m_3) m_1 + 12 m_2 m_3}
\end{array}
\right].
\]

$$P=\left[\begin{array}{cccccc}
1 & 0 & 0 & 1 & 0 & 0 
\\
 0 & 1 & 1 & 0 & 0 & 0 
\\
 1 & 0 & -\frac{\sqrt{3}}{2} & -\frac{1}{2} & 0 & 0 
\\
 0 & 1 & -\frac{1}{2} & \frac{\sqrt{3}}{2} & 0 & 0 
\\
 1 & 0 & \frac{\sqrt{3}}{2} & -\frac{1}{2} & 1 & 0 
\\
 0 & 1 & -\frac{1}{2} & -\frac{\sqrt{3}}{2} & 0 & 1 
\end{array}\right].
$$

$$P^{-1}=\left[\begin{array}{cccccc}
\frac{1}{2} & \frac{\sqrt{3}}{6} & \frac{1}{2} & -\frac{\sqrt{3}}{6} & 0 & 0 
\\
 -\frac{\sqrt{3}}{6} & \frac{1}{2} & \frac{\sqrt{3}}{6} & \frac{1}{2} & 0 & 0 
\\
 \frac{\sqrt{3}}{6} & \frac{1}{2} & -\frac{\sqrt{3}}{6} & -\frac{1}{2} & 0 & 0 
\\
 \frac{1}{2} & -\frac{\sqrt{3}}{6} & -\frac{1}{2} & \frac{\sqrt{3}}{6} & 0 & 0 
\\
 -\frac{1}{2} & -\frac{\sqrt{3}}{2} & -\frac{1}{2} & \frac{\sqrt{3}}{2} & 1 & 0 
\\
 \frac{\sqrt{3}}{2} & -\frac{1}{2} & -\frac{\sqrt{3}}{2} & -\frac{1}{2} & 0 & 1 
\end{array}\right]
$$

\[ P^{-1}Jac(F)|_{(q_0,m_0)} P=
\]\[ 
\left[\begin{array}{cccccc}
0 & 0 & 0 & 0 & -\frac{2 \sqrt{3}\, \left(m_1^{2}+\left(\frac{m_2}{2}+\frac{m_3}{2}\right) m_1 + m_2 m_3 \right) m_3}{\left(12 m_2 + 12 m_3 \right) m_1 + 12 m_2 m_3} & -\frac{m_3 \left(m_1 + 2 m_2 \right) \left(2 m_1 - m_2 - m_3 \right)}{\left(12 m_2 + 12 m_3 \right) m_1 + 12 m_2 m_3} 
\\
0 & 0 & 0 & 0 & -\frac{m_1 m_3 \left(m_2 - m_3 \right)}{\left(4 m_1 + 4 m_2 \right) m_3 + 4 m_1 m_2} & -\frac{3 \left(\left(m_2 + \frac{m_3}{3}\right) m_1 + \frac{2 m_2^{2}}{3}\right) \sqrt{3}\, m_3}{\left(12 m_2 + 12 m_3 \right) m_1 + 12 m_2 m_3} 
\\
0 & 0 & 0 & 0 & -\frac{m_3}{4} & \frac{m_3 \sqrt{3}}{12} 
\\
0 & 0 & 0 & 0 & \frac{\left(2 m_1^{2} m_3 + m_1 \left(m_2 + m_3 \right) m_3 - m_2 m_3^{2}\right) \sqrt{3}}{\left(12 m_2 + 12 m_3 \right) m_1 + 12 m_2 m_3} & \frac{2 \left(\left(-\frac{m_1}{2} + \frac{m_2}{2}\right) m_3 + m_1^{2} + \frac{3 m_1 m_2}{2} + 2 m_2^{2}\right) m_3}{\left(12 m_2 + 12 m_1 \right) m_3 + 12 m_1 m_2} 
\\
0 & 0 & 0 & 0 & \frac{\sqrt{3}\, \left(m_1 + m_3 \right)}{4} & \frac{m_1}{4} - \frac{m_3}{4} 
\\
0 & 0 & 0 & 0 & \frac{m_1}{4} - \frac{m_3}{4} & \frac{\left(m_1 + 4 m_2 + m_3 \right) \sqrt{3}}{12} 
\end{array}\right].
\]

\[J_2=
\left[\begin{array}{cc}
\frac{\sqrt{3}\, \left(m_1 + m_3 \right)}{4} & \frac{m_1}{4} - \frac{m_3}{4} 
\\
\frac{m_1}{4} - \frac{m_3}{4} & \frac{\left(m_1 + 4 m_2 + m_3 \right) \sqrt{3}}{12} 
\end{array}\right].
\]
Therefore $\det(J_2)= \frac{1}{4}\left( m_1 m_2 +  m_1 m_3 +  m_2 m_3\right) $ is not zero for any positive masses. This confirms that Lagrange central configurations are nondegenerate.


 \section{Non-degeneracy of Rhombus  Central Configurations of 4-body Problem}\label{sec4}

As proved in \cite{Long2002}, two pairs of equal masses can form a unique convex central configuration in rhombus shape. The existence of rhombus central configurations was established in \cite{Corbera2014} (Lemma 4). In this subsection, we establish further its non-degeneracy using the framework of Proposition \ref{ThmForm3}. 

 \begin{theorem}\label{DiamondNonDeg}

 For any positive mass  $ m_1 > 0 $ , there exists a unique rhombus-shaped central configuration 
$q_0= \left[0,a, -1,0,0,-a,1,0\right]$ 
where  $ a \in \left( \dfrac{\sqrt{3}}{3}, \sqrt{3} \right) $ , corresponding to the mass vector 
$m_0=[m_1,1,m_1,1].$ Moreover, all such rhombus central configurations are nondegenerate for any four positive masses 
$m_0=[m_1,1,m_1,1].$ 
 \end{theorem}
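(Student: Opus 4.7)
The plan is to apply Proposition \ref{ThmForm3}: for $N=4$ the reduced matrix $J_2$ is $4\times 4$, so it suffices to prove $\det(J_2)\neq 0$ for every admissible rhombus in the family.

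First I would exploit the $\mathbb{Z}_2\times\mathbb{Z}_2$ symmetry of the rhombus. With positions $q_0=[0,a,-1,0,0,-a,1,0]^T$ and masses $[m_1,1,m_1,1]$, the mutual distances reduce to $r_{13}=2a$, $r_{24}=2$ and $r_{12}=r_{14}=r_{23}=r_{34}=\sqrt{1+a^2}$, and the center of mass sits at the origin automatically. The full central configuration system \eqref{CCC} collapses under this symmetry to essentially one scalar relation $\Psi(a,m_1)=0$, and by the existence/uniqueness results \cite{Long2002, Corbera2014} this relation determines a bijection between $m_1\in(0,\infty)$ and $a\in(\sqrt{3}/3,\sqrt{3})$. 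Thus the entire family can be parametrized by $a$ alone via $m_1=m_1(a)$, and nondegeneracy has to be checked on a one-parameter family.

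Next I would construct the matrix $P$ prescribed in Proposition \ref{ThmForm3}, using the two translation columns, the rotation column $A'(0)q_0=[-a,0,0,-1,a,0,0,1]^T$, and the scaling column $q_0$ itself, and then extract the lower-right $4\times 4$ block $J_2$ of $P^{-1}\,\mathrm{Jac}(F)|_{(q_0,m_0)}\,P$. Choosing the remaining coordinates so that they respect the rhombus symmetries should further block-diagonalize $J_2$ into symmetric and antisymmetric parts, providing a convenient factorization of $\det(J_2)$. Substituting $m_1=m_1(a)$ from $\Psi(a,m_1)=0$ then expresses $\det(J_2)$, up to a manifestly positive factor, as a single univariate algebraic function $D(a)$ on $(\sqrt{3}/3,\sqrt{3})$.

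The crux, and the main obstacle, is certifying $D(a)\neq 0$ on the whole open interval. Unlike the Lagrange case of Proposition \ref{LaNonDeg}, where $\det(J_2)$ factors cleanly into a positive polynomial in the masses, here no clean symbolic factorization is visible: $D(a)$ involves mixed powers of $a$ and $(1+a^2)^{1/2}$ coming from both $\Psi$ and the Jacobian entries. I would therefore invoke the interval algorithm advertised in the introduction: partition $(\sqrt{3}/3,\sqrt{3})$ into finitely many closed subintervals, rewrite $D(a)$ so as to pull out common positive factors and minimize the dependency problem inherent in interval arithmetic, then use verified interval evaluation on each subinterval and check that every enclosure is strictly separated from zero. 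The asymptotic regimes $a\to\sqrt{3}/3^+$ and $a\to\sqrt{3}^-$, corresponding to the two extremes $m_1\to 0$ and $m_1\to\infty$, must be handled by an analytic limit argument rather than direct interval evaluation, since they sit at the boundary of the open parameter range. Once the interval verification is completed and the boundary limits ruled out, Proposition \ref{ThmForm3} gives the nondegeneracy of every rhombus central configuration in the family.
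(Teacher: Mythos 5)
Your proposal follows essentially the same route as the paper: reduce to Proposition \ref{ThmForm3}, use the rhombus symmetry to parametrize the whole family by $a\in(\sqrt{3}/3,\sqrt{3})$ through the relation between $a$ and $m_1$, compute the $4\times 4$ block $J_2$, and certify $\det(J_2)\neq 0$ by interval arithmetic. The paper in fact solves the scalar relation explicitly, $m_1=\frac{a^{3}((a^{2}+1)^{3/2}-8)}{(a^{2}+1)^{3/2}-8a^{3}}$, and the monotonicity analysis of this formula is what gives the claimed bijection $m_1\leftrightarrow a$; your appeal to \cite{Long2002,Corbera2014} for uniqueness is acceptable but the explicit formula is what makes the subsequent univariate reduction workable. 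The extra block-diagonalization of $J_2$ you suggest is not carried out in the paper and is not needed.

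The one genuine weakness is your treatment of the endpoints. The right endpoint $a\to\sqrt{3}^-$ poses no difficulty: there $m_1\to 0$, everything stays bounded, and the paper simply runs the interval evaluation on subintervals covering past $\sqrt{3}$. The real obstruction is $a\to(\sqrt{3}/3)^+$, where $m_1\to+\infty$ and $\det(J_2)$ cannot be evaluated by intervals containing the endpoint. An ``analytic limit argument'' as you propose does not close this: knowing the limit is positive (or $+\infty$) gives positivity only on some unspecified punctured neighborhood, which you cannot rigorously glue to an interval computation that starts at an explicit $a=\sqrt{3}/3+\epsilon$. The paper's device is to clear denominators, writing $G=(a^{2}m_1+1)^{4}\det(J_2)=\sum_{k=0}^{8}g_k(a)m_1^{k}$, to enclose each coefficient $g_k(a)$ by interval arithmetic uniformly over $a\in[\sqrt{3}/3,\sqrt{3}/3+0.0001]$, and to verify that the resulting polynomial in $m_1$ is positive for all $m_1>2072$ (a verified lower bound for $m_1$ on that subinterval, using monotonicity of $m_1(a)$). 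You would need this or an equivalent uniform certificate near $a=\sqrt{3}/3$ for the proof to be complete.
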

\begin{proof} The existence is already known in \cite{Corbera2014} which we include here for completeness.
    First let us investigate the relations between $m_1$ and $a$. Substituting $q_0$ and $m_0$ into the equations of central configurations \eqref{CF3}, we obtain two equations: 
    \begin{equation}
        f_{y_1}=-\frac{2 a}{\left(a^{2}+1\right)^{\frac{3}{2}}}-\frac{m_1 a}{4 \left(a^{2}\right)^{\frac{3}{2}}}+\frac{\left(\frac{4 m_1}{\sqrt{a^{2}+1}}+\frac{m_1^{2}}{2 \sqrt{a^{2}}}+\frac{1}{2}\right) a}{2 a^{2} m_1 +2}=0;
    \end{equation}
    and 
    \begin{equation}
        f_{x_2}=\frac{2 m_1}{\left(a^{2}+1\right)^{\frac{3}{2}}}+\frac{1}{4}-\frac{\frac{4 m_1}{\sqrt{a^{2}+1}}+\frac{m_1^{2}}{2 \sqrt{a^{2}}}+\frac{1}{2}}{2 a^{2} m_1 +2}
=0.
    \end{equation}
   Noticing that  {$a*f_{y_1}-\frac{f_{x_2}}{m_1} \equiv 0$} and directly solving one of the above equations, we have: 
   \begin{equation}\label{m1Eq}
       m_1=\frac{a^{3} \left(\left(a^{2}+1\right)^{\frac{3}{2}}-8\right)}{\left(a^{2}+1\right)^{\frac{3}{2}}-8 a^{3}}.
       \end{equation}
{One would expect the other way around, however it is hard to get an explicit formula if it is not impossible.} By simple analysis, equation \eqref{m1Eq} gives us 
\begin{itemize}
    \item For $a\in(0, \frac{\sqrt{3}}{3}),$ $m_1<0$.
    \item For $a\in (\frac{\sqrt{3}}{3}, \sqrt{3}), m_1>0,$ and $m_1$ is strictly decreasing. 
    \item For $a \in (\sqrt{3}, \infty), m_1<0.$
    \item $\lim_{a\rightarrow \frac{\sqrt{3}}{3}} m_1=+\infty $ and $\lim_{a\rightarrow \sqrt{3}} m_1=0.$    
\end{itemize}
\begin{center}
\begin{figure}[!ht]
\centering
\includegraphics[width=5.4in, height=4.1in]{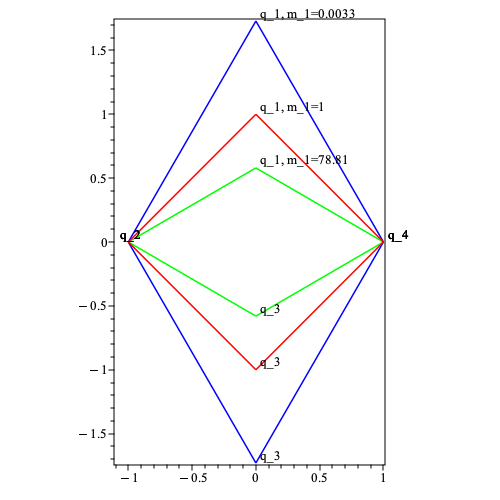}
\caption{ \label{fig4}    Three rhombus central configurations for three different masses $m_1$.    }
 \end{figure}
 \end{center} 
Therefore, for any positive mass $m_1>0$, there is a unique rhombus shape central configuration with the unique value $a\in (\frac{\sqrt{3}}{3}, \sqrt{3})$. Now let us use the framework of Proposition \ref{ThmForm3} to prove the nondegeneracy of such cental configurations. 

With the given $q_0$ and $m_0$, $$P=\left[\begin{array}{cccccccc}
1 & 0 & -a  & 0 & 0 & 0 & 0 & 0 
\\
 0 & 1 & 0 & a  & 0 & 0 & 0 & 0 
\\
 1 & 0 & 0 & -1 & 0 & 0 & 0 & 0 
\\
 0 & 1 & -1 & 0 & 0 & 0 & 0 & 0 
\\
 1 & 0 & a  & 0 & 1 & 0 & 0 & 0 
\\
 0 & 1 & 0 & -a  & 0 & 1 & 0 & 0 
\\
 1 & 0 & 0 & 1 & 0 & 0 & 1 & 0 
\\
 0 & 1 & 1 & 0 & 0 & 0 & 0 & 1 
\end{array}\right]
$$

$$P^{-1}=\left[\begin{array}{cccccccc}
\frac{1}{a^{2}+1} & \frac{a}{a^{2}+1} & \frac{a^{2}}{a^{2}+1} & -\frac{a}{a^{2}+1} & 0 & 0 & 0 & 0 
\\
 -\frac{a}{a^{2}+1} & \frac{1}{a^{2}+1} & \frac{a}{a^{2}+1} & \frac{a^{2}}{a^{2}+1} & 0 & 0 & 0 & 0 
\\
 -\frac{a}{a^{2}+1} & \frac{1}{a^{2}+1} & \frac{a}{a^{2}+1} & -\frac{1}{a^{2}+1} & 0 & 0 & 0 & 0 
\\
 \frac{1}{a^{2}+1} & \frac{a}{a^{2}+1} & -\frac{1}{a^{2}+1} & -\frac{a}{a^{2}+1} & 0 & 0 & 0 & 0 
\\
 \frac{a^{2}-1}{a^{2}+1} & -\frac{2 a}{a^{2}+1} & -\frac{2 a^{2}}{a^{2}+1} & \frac{2 a}{a^{2}+1} & 1 & 0 & 0 & 0 
\\
 \frac{2 a}{a^{2}+1} & \frac{a^{2}-1}{a^{2}+1} & -\frac{2 a}{a^{2}+1} & -\frac{2 a^{2}}{a^{2}+1} & 0 & 1 & 0 & 0 
\\
 -\frac{2}{a^{2}+1} & -\frac{2 a}{a^{2}+1} & -\frac{a^{2}-1}{a^{2}+1} & \frac{2 a}{a^{2}+1} & 0 & 0 & 1 & 0 
\\
 \frac{2 a}{a^{2}+1} & -\frac{2}{a^{2}+1} & -\frac{2 a}{a^{2}+1} & -\frac{a^{2}-1}{a^{2}+1} & 0 & 0 & 0 & 1 
\end{array}\right].
$$
  Applying these to Jacobian matrix, $P^{-1}Jac(F)_{(q_0,m_0)} P$ has four zero columns and we compute the determinant of the right bottom $4$ by $4$ matrix $J_2$, which is given below.  

\resizebox{\textwidth}{!}{
$\left[\begin{array}{cccc}
J_{11} & \frac{m_1 \left(\left(a^{2}+1\right)^{\frac{5}{2}} )-20 a^{5}+4 a^{3}\right)}{2 a^{2} \left(a^{2}+1\right)^{\frac{7}{2}}} & \frac{a^{2} \left(\left(a^{2}+1\right)^{\frac{5}{2}}+4 a^{2}-20\right)}{2 \left(a^{2}+1\right)^{\frac{7}{2}}} & \frac{\left(\left(a^{2}+1\right)^{\frac{5}{2}}+16 a^{2}-32\right) a}{4 \left(a^{2}+1\right)^{\frac{7}{2}}} 
\\
 \frac{\left(\left(a^{2}+1\right)^{\frac{5}{2}} -32 a^{5}+16 a^{3}\right) m_1}{4 a^{2} \left(a^{2}+1\right)^{\frac{7}{2}}} & J_{22} & \frac{a \left(\left(a^{2}+1\right)^{\frac{5}{2}}+4 a^{2}-20\right)}{2 \left(a^{2}+1\right)^{\frac{7}{2}}} & -\frac{\left(\left(a^{2}+1\right)^{\frac{5}{2}}+16 a^{2}-32\right) a^{2}}{4 \left(a^{2}+1\right)^{\frac{7}{2}}} 
\\
 -\frac{\left(\left(a^{2}+1\right)^{\frac{5}{2}} -32 a^{5}+16 a^{3}\right) m_1}{4 \left(a^{2}+1\right)^{\frac{7}{2}} a^{3}} & \frac{m_1 \left(\left(a^{2}+1\right)^{\frac{5}{2}} -20 a^{5}+4 a^{3}\right)}{2 a^{2} \left(a^{2}+1\right)^{\frac{7}{2}}} & J_{33} & \frac{\left(\left(a^{2}+1\right)^{\frac{5}{2}}+16 a^{2}-32\right) a}{4 \left(a^{2}+1\right)^{\frac{7}{2}}} 
\\
 \frac{\left(\left(a^{2}+1\right)^{\frac{5}{2}} -32 a^{5}+16 a^{3}\right) m_1}{4 a^{2} \left(a^{2}+1\right)^{\frac{7}{2}}} & \frac{m_1 \left(\left(a^{2}+1\right)^{\frac{5}{2}} -20 a^{5}+4 a^{3}\right)}{2 \left(a^{2}+1\right)^{\frac{7}{2}} a^{3}} & \frac{a \left(\left(a^{2}+1\right)^{\frac{5}{2}}+4 a^{2}-20\right)}{2 \left(a^{2}+1\right)^{\frac{7}{2}}} & J_{44}
\end{array}\right]
$}
$$J_{11}=\left[\frac{\left(a^{2}+1\right)^{\frac{5}{2}} \left(\left(a^{4} m_1^{2}-m_1 \right) +a^{5}+a^{3}\right)}{4} \right.$$ $$\left. -2 \left(a^{6} m_1^{2}+\left(-5 m_1^{2}-3 m_1 +1\right) a^{4}+\left(-10 m_1 -1\right) a^{2}-m_1 -2\right) a^{3} \right]\cdot$$ $$ \left[\left(a^{2}+1\right)^{\frac{7}{2}} \left(a^{2} m_1 +1\right) a^{3}\right]^{-1}; $$
$$J_{22}=\left[\frac{\left(a^{2}+1\right)^{\frac{5}{2}} \left(\left(a^{4} m_1^{2}+3 a^{2} m_1^{2}+2 m_1 \right) +a^{5}+a^{3}\right)}{4} \right.
$$ $$\left. +4 \left(\left(m_1^{2}+\frac{3}{2} m_1 \right) a^{6}+\left(-2 m_1^{2}+3 m_1 +1\right) a^{4}+\left(-m_1 +\frac{1}{2}\right) a^{2}+\frac{m_1}{2}-\frac{1}{2}\right) a^{3}\right]\cdot $$ $$\left[\left(a^{2}+1\right)^{\frac{7}{2}} \left(a^{2} m_1 +1\right) a^{3}\right]^{-1};$$
$$J_{33}= \left[\left(\frac{m_1^{2} \left(a^{2}+1\right)}{2}+a^{5} m_1 +\frac{3 a^{3}}{2}+\frac{a}{2}\right) \left(a^{2}+1\right)^{\frac{5}{2}}\right. $$ $$\left. -\left(4 \left(m_1^{2}-m_1 \right) a^{6}+4 \left(-m_1^{2}+2 m_1 \right) a^{4}+4 \left(-2 m_1^{2}-6 m_1 +4\right) a^{2}-12 m_1 -8\right) a\right]\cdot$$ $$ \left[2 \left(a^{2}+1\right)^{\frac{7}{2}} a \left(a^{2} m_1 +1\right)\right]^{-1}; $$
$$J_{44}=  \left[\left(a^{2}+1\right)^{\frac{5}{2}} \left(-m_1^{2} \left(a^{2}+1\right) +a^{5} m_1 -a \right)+\left(16 m_1^{2}+8 m_1 \right) a^{7}+\left(8 m_1^{2}+80 m_1 \right) a^{5} \right.$$ $$\left. -\left(8 m_1^{2}-24 m_1 -40\right)^{\frac{}{}} a^{3}-8 a\right] \left[4 \left(a^{2}+1\right)^{\frac{7}{2}} a \left(a^{2} m_1 +1\right)\right]^{-1}, $$
where $m_1$ is given by equation (\ref{m1Eq}).
\begin{center}
\begin{figure}[!ht]
\centering
\includegraphics[width=4.4in, height=3.1in]{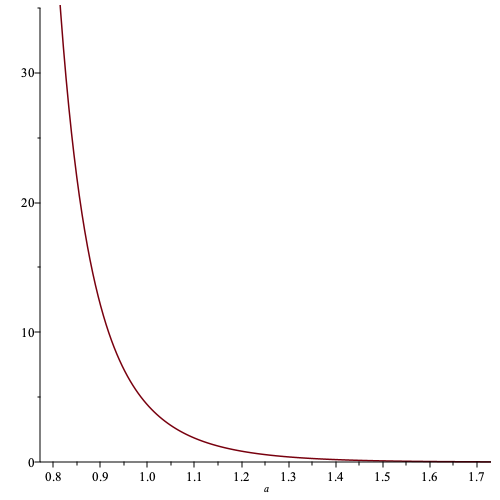}
\caption{ \label{fig3}   The graph shows that the determinant of $J_2$ is positive for $a\in (\frac{\sqrt{3}}{3}, \sqrt{3})$, which means that the rhombus central configurations are nondegenerate.   }
 \end{figure}
 \end{center}
We can numerically compute the value $\det(J_2)$ for given $a \in (\frac{\sqrt{3}}{3}, \sqrt{3})$. \\For example $\det(J_2)(1) \approx 4.4064>0$. Figure \ref{fig3} suggests that the determinant of $J_2$ is positive for $a\in (\frac{\sqrt{3}}{3}, \sqrt{3})$, which means that the rhombus central configurations are nondegenerate. 

It is not feasible to prove analytically that the determinant of $J_2$ is positive over the entire domain $a \in (\frac{\sqrt{3}}{3}, \sqrt{3})$, despite its explicit expression being easily obtainable. We therefore employ interval arithmetic to rigorously prove the positivity of $\det(J_2)$. This approach is justified by the fundamental theorem of interval arithmetic \cite{Moore, Santoprete2015, MZ2019, xie2023}. The interval arithmetic evaluation $f([x])$ of a function $f$ over an interval $[x]$ provides an inclusion interval extension of the range $R(f;[x])$; that is,
$$ R(f;[x]) \subseteq f([x]). $$
We implement the interval arithmetic computations using SageMath 10.5. For example, for the function $f(x)=x^2-2x+1$, we have $f([-2,2]) = [-3,9]$ while the true range is $R(f;[-2,2])=[0,9]$, confirming that $R(f;[-2,2]) \subseteq f([-2,2])$.
This inclusion property is the foundation of all applications of interval arithmetic and, consequently, the basis for its use in our proposed method below. \\

Since $m_1$ approaches $+\infty$ as $a$ approaches $\frac{\sqrt{3}}{3}$, we have to divide the interval computation into two parts. 
\begin{itemize}
    \item The interval $[\sqrt{3}/3+0.1, \sqrt{3}+0.01]$ is divided into 1000 equal small intervals. The determinant of $J_2$ is always positive on these small intervals. Similarly, we can further prove that The determinant of $J_2$ is positive on $[\sqrt{3}/3+0.0001, \sqrt{3}+0.1]$
    \item  We cannot directly evaluate the determinant on the small interval $[\sqrt{3}/3, \sqrt{3}/3 +0.0001]$ because $m_1$ is undefined at the left endpoint. First note that $m_1$ is decreasing on $(\sqrt{3}/3, \sqrt{3}/3 +0.0001]$  and $m_1>2072$. Here we treat $m_1$ as a parameter in the expression of $\det(J_2)$ and 
    $$G\equiv (a^2m_1+1)^4\det(J_2) =g_8(a) m_1^8 +g_7(a)m_1^7 +\cdots +g_1(a)m_1+g_0(a). $$
    At $a=\sqrt{3}/3$, 

\[G= 0.237130883 - 4.337250275m_1 + 0.249686731m_1^2 + 41.02060288m_1^3 \] \[+63.96499337m_1^4 + 39.4675289m_1^5 + 11.88011182m_1^6 \] \[+ 1.744273435m_1^7 + 0.1001129150m_1^8\]
which is positive for any $m_1>1$. \\
Now applying interval computation on the interval $[\sqrt{3}/3, \sqrt{3}/3 +0.0001]$,  $$G=[0.23210355956589200407097272 ,  0.24105527338835583080436715] $$ 
$$+[-4.3604538820343463398993487 , -4.3106814046303581488042712]m_1 $$
$$+\cdots + $$ $$[0.09973285108755007246309348335 , 0.1004895898006603643044068186]m_1^8,$$
where only the coefficient of linear term $m_1$ is negative. It is clear  that $G$ is positive for all $m_1>2072$. Therefore, $\det(J_2)$ is positive on the interval $(\sqrt{3}/3, \sqrt{3}/3 +0.0001]$,
\end{itemize}
This completes the proof that  $\det(J_2)$ is positive on $(\sqrt{3}/3, \sqrt{3})$ and all the rhombus central configurations are nondengenrate.  


\end{proof}



\section{Conclusions}\label{sec5}

When studying degeneracy of central configurations with bifurcation bearing in mind, we emphasize to work directly in the full configuration space, which is also convenient from the computer-aided computations perspectives.  
By developing a systematic methods to eliminate trivial zero eigenvalues due to the invariance of translation, rotation or scaling, we provide a unified framework for analyzing the Jacobian matrix. This allows for a precise characterization of degeneracy, distinguishing it from the effects of those symmetries.

Four distinct formulations of degeneracy of central configurations presented here offer flexibility in handling different scenarios, enhancing the toolkit for studying central configurations. Applications to classical examples, such as the square configuration and the equilateral triangle with a central mass, not only validate the method but also uncover the known critical mass thresholds where degeneracy emerges. In particular, the analysis reaffirms the non-degeneracy of Lagrange’s equilateral triangle central configurations for any mass.

These investigations facilitate and pave the way to explore the degeneracy of central configurations, especially their bifurcations in the full configuration space, which is our next topic to pursue. 

{\bf Acknowledgements:}
Shanzhong Sun is partially supported by the National Key R\&D Program of China (2020YFA 0713300), NSFC (Nos. 11771303, 12171327, 11911530092, 12261131498, 11871045). Zhifu Xie is partially supported by Wright W. and Annie Rea Cross Endowment Funds at the University of Southern Mississippi and by National Science Foundation (Award Number (FAIN): 2447675).


\end{document}